\numberwithin{equation}{section}
\newtheorem{theorem}{Theorem}[section]
\newtheorem{lemma}[theorem]{Lemma}
\theoremstyle{definition}
\newtheorem{remark}[theorem]{Remark}
\newcommand{\ds}{\displaystyle}
\newcommand{\Om}{\Omega}
\newcommand{\om}{\omega}
\newcommand{\la}{\lambda}
\newcommand{\va}{\varepsilon}
\newcommand{\be}{\beta}
\newcommand{\al}{\alpha}
\newcommand{\R}{{\mathbb R}}
\begin{document}
\title
[positive radial solutions for coupled Schr\"{o}dinger system] {Positive radial solutions for coupled Schr\"{o}dinger system with critical exponent in $\R^N\,(N\geq5)$}
\author{Yan-fang Peng \, and \,Hong-yu Ye$^*$
}

\address{Y. F. Peng, School of Mathematics and Statistics, Central China
Normal University, Wuhan, 430079, P. R. China }

\email{ pyfang2005@sina.com }

\address{$^*$ Corresponding author: H. Y. Ye, School of Mathematics and Statistics, Central China
Normal University, Wuhan, 430079, P. R. China}

\email{ yyeehongyu@163.com}

\begin{abstract}
We study the following coupled Schr\"{o}dinger system
\begin{equation*}
 \left\{
 \begin{array}{ll}
 \ds -\Delta u+u=u^{2^*-1}+\be u^{\frac{2^*}{2}-1}v^{\frac{2^*}{2}}+\la_1u^{\al-1},\,\,\, &x\in \R^N, \vspace{0.2cm}\\
 \ds -\Delta v+v=v^{2^*-1}+\be u^{\frac{2^*}{2}}v^{\frac{2^*}{2}-1}+\la_2v^{r-1},\,\,\, &x\in \R^N, \vspace{0.2cm}\\
 u,v\,>\,0\,, &x\in \R^N,
 \end{array}
 \right.
\end{equation*}
where $N\geq 5, \la_1,\la_2>0,\be\neq 0, 2<\al,r<2^*,2^*\triangleq \frac{2N}{N-2}.$ Note that the nonlinearity and the coupling terms are both critical. Using the Mountain Pass Theorem, Ekeland's variational principle and Nehari mainfold, we show that this critical system has a positive radial solution for positive $\be$ and some negative $\be$ respectively. \\

\noindent{\bf Keywords:} Schr\"{o}dinger system; critical exponent; positive solution\\
\noindent{\bf Mathematics Subject Classification(2000):} 35J60, 35B33\\
\end{abstract}

\maketitle
\section{Introduction}

In this paper, we consider the following coupled nonlinear Schr\"{o}dinger system
\begin{equation}\label{1.1}
 \left\{
 \begin{array}{ll}
 \ds -\Delta u+u=u^{2^*-1}+\be u^{\frac{2^*}{2}-1}v^{\frac{2^*}{2}}+\la_1u^{\al-1},\,\,\, &x\in \R^N, \vspace{0.2cm}\\
 \ds -\Delta v+v=v^{2^*-1}+\be u^{\frac{2^*}{2}}v^{\frac{2^*}{2}-1}+\la_2v^{r-1},\,\,\, &x\in \R^N, \vspace{0.2cm}\\
 u,v\,>\,0\,, &x\in \R^N,
 \end{array}
 \right.
\end{equation}
where $N\geq 5, \la_1,\la_2>0,\be\neq 0,2<\al,r<2^*,2^*\triangleq \frac{2N}{N-2}.$ We are interested in the existence of a nontrivial solution $(u,v)$ for \eqref{1.1}, that is to say that $u\not\equiv0$ and $v\not\equiv0$. We call a solution $(u,v)$ semi-trivial if $(u,v)$ is type of $(u,0)$ or $(0,v)$.

In recent years, there have been a lot of studies on the following coupled system of the time-dependent nonlinear Schr\"{o}dinger equations
\begin{equation}\label{1.2}
 \left\{
 \begin{array}{ll}
 \ds -i\frac{\partial}{\partial t}\Phi_1-\Delta \Phi_1=\mu_1|\Phi_1|^2\Phi_1+\beta|\Phi_2|^2\Phi_1,\,\,\, &x\in \Omega,~t>0, \vspace{0.2cm}\\
 -i\frac{\partial}{\partial t}\Phi_2-\Delta \Phi_2=\mu_2|\Phi_2|^2\Phi_2+\beta|\Phi_1|^2\Phi_2,\,\,\, &x\in \Omega,~t>0, \vspace{0.2cm}\\
 \Phi_j=\Phi_j(x,t)\in\mathbb{C},\,\, j=1,2,\vspace{0.2cm}\\
 \Phi_j(x,t)=0,\,\, j=1,2,\,\,\, &x\in \partial\Omega,t>0,
 \end{array}
 \right.
\end{equation}
where $\Omega=\R^N$ or $\Om\subset\R^N$ is a smooth bounded domain. $i$ is the imaginary unit, $\mu_1,\mu_2>0$ and a coupling constant $\beta\neq0$. When $N\leq 3$, system \eqref{1.2} appears in many physical problems, especially in nonlinear optics. Physically, the solution
$\Phi_j$ denotes the $j$th component of the beam in Kerr-like photorefractive media (see \cite{aa}). The positive constant $\mu_j$ is for self-focusing in the $j$th component of the beam. The coupling constant $\be$ is the interaction between the two components of the beam. The interaction is attractive if $\be>0$ while it is  repulsive if $\be<0$. System \eqref{1.2} also arises  in the Hartree-Fock theory for a binary mixture of Bose-Einstein condensates in two different hyperfine states, see more details in \cite{aa,egbb,m}.

To obtain solitary wave solutions of system \eqref{1.2}, we set $\Phi_1(x,t)=e^{i\la_1x}u(x),$ $\Phi_2(x,t)=e^{i\la_2x}v(x)$, then \eqref{1.2} turns to be the following elliptic system
\begin{equation}\label{1.3}
 \left\{
 \begin{array}{ll}
 \ds -\Delta u+\la_1u=\mu_1u^3+\be uv^2,\,\,\, &x\in \Om, \vspace{0.2cm}\\
 -\Delta v+\la_2v=\mu_2v^3+\be u^2v,\,\,\, &x\in \Om, \vspace{0.2cm}\\
 u,v\,=0\,, &x\in \partial\Om,
 \end{array}
 \right.
\end{equation}
 where $\Omega=\R^N$ or $\Om\subset\R^N$ is a smooth bounded domain, $\mu_1,\mu_2>0$ and $\beta\neq0$. When $N\leq3$, system \eqref{1.3} is a problem of subcritical growth. Problem \eqref{1.3} was first studied by Lin and Wei in \cite{lw} where they obtained a nontrivial solution when $\Omega=\R^3$ and $\beta>0$ is sufficiently small. After that, the existence and multiplicity of positive and sign-changing solutions  have been extensively studied, we can refer to \cite{ac1,ac2,bdw,bw1,bw2,cz1,cz2,dww,lw2,lw3,lw4,p,sk,ww1,ww2}. In particular, in \cite{cz1}, Chen and Zou studied problem \eqref{1.3} for $N=4$ and $\Om$ is a bounded domain of $\R^4$. In such case, the nonlinearity and the coupling terms are both of critical growth. By Ekeland's variational principle and the Mountain Pass Theorem, they showed that if $-\lambda_1(\Omega)<\lambda_1\leq\lambda_2<0$, then there exist $\beta_1\in (0,\min\{\mu_1,\mu_2\}),$ $\beta_2\geq\max\{\mu_1,\mu_2\}$ such that \eqref{1.3} has a positive least energy solution for $\beta\in (-\infty,0)\cap(0,\beta_1)\cap(\beta_2,+\infty)$, where $\la(\Om)$ is the first eigenvalue of $-\Delta$ with the Dirichlet boundary condition. Meanwhile, \eqref{1.3} does not have a nontrivial nonnegative solution if $\mu_2\leq\beta\leq\mu_1$ and $\mu_2<\mu_1$. In a similar way to \cite{cz1}, Chen and Zou in \cite{cz2} considered the following critically coupled nonlinear Schr\"{o}dinger equations:
 \begin{equation}\label{1.4}
 \left\{
 \begin{array}{ll}
 \ds -\Delta u+\la_1u=\mu_1u^{2^*-1}+\be u^{2^*/2-1}v^{2^*/2},\,\,\, &x\in \Om, \vspace{0.2cm}\\
 -\Delta v+\la_2v=\mu_2v^{2^*-1}+\be u^{2^*/2}v^{2^*/2-1},\,\,\, &x\in \Om, \vspace{0.2cm}\\
 u,v\,> 0,\,x\in \Om, u=v=0,\,x\in \partial\Om,
 \end{array}
 \right.
\end{equation}
where $\Om$ is a bounded domain in $\R^N(N\geq5)$. However, since $N\geq5$, different phenomenons may happen comparing to the case $N=4$ and it is much more complicated to handle. In \cite{cz2}, they proved that if $-\lambda_1(\Omega)<\lambda_1\leq\lambda_2<0$, \eqref{1.4} has a positive least energy solution for any $\beta\neq0$.
Furthermore, in \cite{sk}, by using the Mountain Pass Theorem, Kim showed that  problem \eqref{1.4} has a nontrivial solution in the following two cases: $\beta$ is sufficiently large or $|\beta|$ is small enough.

Problem \eqref{1.1} can be seen as a counterpart of the following problem:
\begin{equation}\label{1.5}
 -\Delta u+u=|u|^{2^*-2}u+f(u),\,\,x\in \R^N.
\end{equation}
In \cite{dyb}, under the following assumptions on $f(t)$:

$(f_1)$\,\,$\ds f\in C^2(\R^1), \lim_{t\rightarrow 0^+}\frac{f(t)}{t}=0, \lim_{t\rightarrow \infty}\frac{f(t)}{t^{2^*-1}}=0$ for $t\geq0$;

$(f_2)$\,\,There exists an $\varepsilon>0$ small enough such that
$$
tf'(t)\geq (1+\va)f(t)>0
$$
for $t>0$;

$(f_3)$\,\,$f(t)$ is odd,\\
Deng proved that when $N\geq4$, \eqref{1.5} has at least a positive least energy radial solution with its corresponding energy $<\frac1NS^{\frac{N}{2}}$ by using the Mountain Pass Theorem, where $S$ is the sharp constant of $D^{1,2}(\R^N)\hookrightarrow L^{2^*}(\R^N)$, i.e.
$$S=\inf\limits_{\substack{u\not\equiv0\\ u\in D^{1,2}(\R^N)}}\frac{\int_{\R^N}|\nabla u|^2}{(\int_{\R^N}|u|^{2^*})^{\frac{2}{2^*}}},$$
where $D^{1,2}(\R^N)\triangleq\{u\in L^{2^*}(\R^N)|~|\nabla u|\in L^2(\R^N)\}.$

In particular, in \eqref{1.5}, letting $f(u)=\la_1 |u|^{\al-2}u$ or $f(u)=\la_2|u|^{r-2}u$ with $\la_1,\la_2>0,2<\al,r<2^*$, then we conclude from \cite{dyb} that the following two equations
\begin{equation}\label{1.6}
 -\Delta u+u=|u|^{2^*-2}u+\la_1|u|^{\al-2}u,\,\,x\in \R^N
\end{equation}
and
\begin{equation}\label{1.7}
 -\Delta u+u=|u|^{2^*-2}v+\la_2|u|^{r-2}u,\,\,x\in \R^N
\end{equation}
respectively have at least one positive radial solution, denoted by $u_1,$ $v_1$. Moreover, their corresponding energy respectively satisfies that
\begin{equation}\label{1.8}
B_1\triangleq \ds (\frac12-\frac{1}{2^*})\int_{\R^N}|u_1|^{2^*}+(\frac12-\frac1\al)\la_1\int_{\R^N}|u_1|^\al<\frac1N S^{\frac N2}
\end{equation}
and
\begin{equation}\label{1.9}
B_2\triangleq \ds (\frac12-\frac{1}{2^*})\int_{\R^N}|v_1|^{2^*}+(\frac12-\frac1r)\la_2\int_{\R^N}|v_1|^r<\frac1N S^{\frac N2}.
\end{equation}

Based on the above papers, an interesting question is:  whether we can extend the existence results of \eqref{1.5} to system \eqref{1.1}.  In this paper, we will mainly discuss the existence of positive solutions to \eqref{1.1} in $\R^N$ with $N\geq5$, and obtain an affirmative answer.
As far as we know, there is no existence result for \eqref{1.1}.

Define $H\triangleq H^1_r(\R^N)\times H^1_r(\R^N)$ with  the norm
$$
\|(u\ ,v)\|=\Big[\int_{\R^N}(|\nabla u|^2+|u|^2)+\int_{\R^N}(|\nabla v|^2+|v|^2)\Big]^{\frac12},
$$
where $H_r^1(\R^N)\triangleq\{u\in H^1(\R^N):u(x)=u(|x|)\}$. It is well known that weak solutions of \eqref{1.1} correspond to critical points of the energy functional $I:H\rightarrow\R$ defined as follows
$$
I(u,v)=\frac{1}{2}\|(u,v)\|^2-\frac{1}{2^*}\ds\int_{\R^N}(|u|^{2^*}+|v|^{2^*}+2\beta|u|^{\frac{2^*}{2}}
|v|^{\frac{2^*}{2}})-\frac{\la_1}{\al}\int_{\R^N}|u|^\al-\frac{\la_2}{r}\int_{\R^N}|v|^r,
$$
for any $(u,v)\in H.$ We say $(u,v)\in H$ a positive solution of \eqref{1.1} if $(u,v)$ is a solution of \eqref{1.1} and $u>0,v>0.$

To state our main results, we set
$$
\begin{array}{ll}
M=\Big\{&\ds (u,v)\in H|~u\not\equiv0,~v\not\equiv0,~\int_{\R^N}(|\nabla u|^2+|u|^2)=\int_{\R^N}(|u|^{2^*}+\beta|u|^{\frac{2^*}{2}}
|v|^{\frac{2^*}{2}}+\la_1|u|^\al),\\
   &~~\ds \int_{\R^N}(|\nabla v|^2+|v|^2)=\int_{\R^N}(|v|^{2^*}+\be |u|^{\frac{2^*}{2}}
|v|^{\frac{2^*}{2}}+\la_2|v|^r)\Big\}.
 \end{array}
$$
Then $M\neq\emptyset$. In fact, take $\varphi,\psi\in C_0^{\infty}(\R^N)$ with $\varphi,\psi\not\equiv0$ and $supp(\varphi)\cap supp(\psi)=\emptyset$, then there exist $t_1,t_2>0$ such that $(t_1\varphi,t_2\psi)\in M$ since $\al,r>2$. So $M\neq\emptyset$.

Denote
$$
B\triangleq \ds \inf_{(u,v)\in M}I(u,v).
$$
It is easy to see that $B>0$ by the Sobolev embedding inequality.\\

Our main results are as follows:
\begin{theorem}\label{th1.1} If \,$N\geq5, \la_1,\la_2>0,2<\al,r<2^*$, then problem \eqref{1.1} has a positive solution $(u,v)\in H$ with $I(u,v)=B$ for any $\be>0$.
\end{theorem}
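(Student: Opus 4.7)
The plan is to show that the infimum $B=\inf_{M} I$ is attained and then to upgrade the minimizer to a positive solution of \eqref{1.1}. Adding the two Nehari identities that define $M$ and substituting into $I$ gives, for every $(u,v)\in M$,
$$
I(u,v)=\frac{1}{N}\int_{\R^N}\Big(|u|^{2^*}+|v|^{2^*}+2\be|u|^{\frac{2^*}{2}}|v|^{\frac{2^*}{2}}\Big)+\Big(\frac{1}{2}-\frac{1}{\al}\Big)\la_1\int_{\R^N}|u|^\al+\Big(\frac{1}{2}-\frac{1}{r}\Big)\la_2\int_{\R^N}|v|^r.
$$
For $\be>0$ and $\al,r>2$ every term on the right is nonnegative, so any minimizing sequence $\{(u_n,v_n)\}\subset M$ has each summand — and therefore, via the Nehari identities themselves, $\|(u_n,v_n)\|$ — bounded. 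Because $\al,r>2$ the $2\times 2$ Jacobian of the pair of Nehari constraints is nondegenerate on $M$, so a Lagrange-multiplier argument converts minimization on $M$ into a Palais–Smale scheme for the unconstrained $I$: $I'(u_n,v_n)\to 0$ in $H^{-1}$. Extracting a weakly convergent subsequence, $(u_n,v_n)\rightharpoonup(u_0,v_0)$ is a weak solution of \eqref{1.1}, with strong convergence in $L^\al\cap L^r$ furnished by the Strauss radial compactness of $H^1_r(\R^N)$.

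Writing $w_n=u_n-u_0$, $z_n=v_n-v_0$, the Brezis–Lieb lemma applied to $\int|u_n|^{2^*}$, $\int|v_n|^{2^*}$ and, after a Hölder decomposition, to $\int|u_n|^{2^*/2}|v_n|^{2^*/2}$, together with the $H^1$ orthogonality $\|u_n\|^2_{H^1}=\|u_0\|^2_{H^1}+\|w_n\|^2_{H^1}+o(1)$, produces the residual Nehari identity
$$
\|w_n\|_{H^1}^2=\int_{\R^N}|w_n|^{2^*}+\be\int_{\R^N}|w_n|^{\frac{2^*}{2}}|z_n|^{\frac{2^*}{2}}+o(1)
$$
and an analogous one for $z_n$, along with the energy splitting $B=I(u_0,v_0)+\frac{1}{N}\int(|w_n|^{2^*}+|z_n|^{2^*}+2\be|w_n|^{2^*/2}|z_n|^{2^*/2})+o(1)$. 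Feeding the Sobolev inequality into the two residual identities shows that a nonvanishing bubble contributes at least $\frac{1}{N}S^{N/2}$ to the energy, leaving the dichotomy: either $(w_n,z_n)\to 0$ strongly in $H$, or $B\ge I(u_0,v_0)+\frac{1}{N}S^{N/2}$.

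To kill the bubble and simultaneously rule out $u_0\equiv 0$ or $v_0\equiv 0$, I would test $B$ against the radial solutions $u_1,v_1$ of the decoupled equations \eqref{1.6}, \eqref{1.7}. Choose $t_1,t_2>0$ with $(t_1u_1,t_2v_1)\in M$ — such a projection exists because $\al,r>2$ and, for $\be>0$, the $2\times 2$ system of Nehari equations in $(t_1,t_2)$ is solvable by a monotonicity/intermediate-value argument. Since $\be>0$ contributes a strictly negative term $-\frac{2\be}{2^*}\int(t_1u_1)^{2^*/2}(t_2v_1)^{2^*/2}$ to $I$, a direct computation using \eqref{1.8}, \eqref{1.9} yields $B\le I(t_1u_1,t_2v_1)<\min\{B_1,B_2\}+\frac{1}{N}S^{N/2}$. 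A semi-trivial weak limit $(u_0,0)$ or $(0,v_0)$ would force $I(u_0,v_0)\ge B_1$ or $\ge B_2$ (the surviving component being a nontrivial weak solution of \eqref{1.6} or \eqref{1.7}), so this upper bound is incompatible with the dichotomy unless $(w_n,z_n)\to 0$ strongly and $u_0,v_0\not\equiv 0$. Hence $(u_0,v_0)\in M$ with $I(u_0,v_0)=B$; replacing it by $(|u_0|,|v_0|)$ (which preserves both $I$ and $M$ when $\be>0$) and applying the strong maximum principle to each equation in \eqref{1.1} yields a positive solution. The main obstacle is the threshold inequality $B<\min\{B_1,B_2\}+\frac{1}{N}S^{N/2}$: both the existence of the projection $(t_1,t_2)$ and the sharp comparison of $I(t_1u_1,t_2v_1)$ with the thresholds from \eqref{1.8}, \eqref{1.9} hinge on the attractive sign $\be>0$ assumed in this theorem.
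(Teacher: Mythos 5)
Your argument breaks down at the compactness threshold, which is the heart of this theorem. You claim that "feeding the Sobolev inequality into the two residual identities shows that a nonvanishing bubble contributes at least $\frac1N S^{\frac N2}$." For $\be>0$ this is false: the residual identity for $w_n$ reads $\int|\nabla w_n|^2=\int|w_n|^{2^*}+\be\int|w_n|^{\frac{2^*}{2}}|z_n|^{\frac{2^*}{2}}+o(1)$, and the \emph{positive} coupling term prevents the Sobolev inequality from forcing $\int|\nabla w_n|^2\geq S^{\frac N2}$. A bubble in which both components concentrate together lies asymptotically on the Nehari manifold $\mathcal{N}'$ of the limiting system \eqref{2.1}, and its energy is bounded below only by $A=\inf_{\mathcal{N}'}E$; since $(U_{\va,0},0)\in\mathcal{N}'$ one has $A\leq\frac1N S^{\frac N2}$, and a diagonal ansatz $(cU_{\va,0},cU_{\va,0})$ shows $A$ is of order $(1+\be)^{-\frac{N-2}{2}}S^{\frac N2}$, hence far below $\frac1N S^{\frac N2}$ for large $\be$. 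So the inequality you actually need to exclude a vanishing weak limit is $B<A$, and your upper bound $B<\min\{B_1,B_2\}+\frac1N S^{\frac N2}$ says nothing about it. This is exactly what Step 1 of Lemma~\ref{le2.3} supplies: one must test with truncated rescalings of the least energy solution $(U,V)$ of \eqref{2.1} (Lemma~\ref{le2.1}(ii)) and check that the perturbation terms contribute $-C\va^{N-\frac{N-2}{2}\al}$, which beats the $O(\va^2)$ cut-off error precisely because $N\geq5$ and $\al<2^*$. The limiting system, its ground level $A$, and this Brezis--Nirenberg-type estimate are entirely absent from your proposal and cannot be bypassed.

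There are two further problems. First, to exclude semitrivial limits the paper proves the sharper inequality $\mathcal{B}<\min\{B_1,B_2\}$ (Step 2 of Lemma~\ref{le2.3}, an implicit-function expansion $t(s)=1-c\be|s|^{\frac{2^*}{2}}(1+o(1))$ around $(u_1,0)$ exploiting $\frac{2^*}{2}<2$); your weaker bound with the extra $\frac1N S^{\frac N2}$ would only help if you also knew the vanishing component must carry a full single-component bubble, which requires a uniform lower bound $\int|v_n|^{2^*}\geq C_1$ along the minimizing sequence. Second, that lower bound and the nondegeneracy of the $2\times2$ constraint Jacobian underlying your Ekeland/Lagrange-multiplier scheme on $M$ are established in the paper only for $-\frac12\leq\be<0$ (Lemma~\ref{le3.2} and the $\det F_n$ computation), and both proofs use $\be<0$ to discard the coupling term; for $\be>0$ H\"older control of the coupling produces only a power $\|v\|^{\frac{2^*}{2}}$ with $\frac{2^*}{2}<2$, which yields no lower bound, and the determinant can a priori degenerate. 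This is precisely why the paper abandons direct minimization over $M$ for $\be>0$ and instead runs the Mountain Pass Theorem at the level $\mathcal{B}=\inf_{\mathcal{M}}I\leq B$, identifying the critical point as an element of $M$ with $I=\mathcal{B}=B$ only at the end.
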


\begin{theorem}\label{th1.2} If \,$N\geq5, \la_1,\la_2>0,2<\al,r<2^*$, then problem \eqref{1.1} has a positive solution $(u,v)\in H$ with $I(u,v)=B$ for any $-\frac12\leq\be<0$.
\end{theorem}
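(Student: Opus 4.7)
The plan is to realize the solution as a minimizer of $I$ on the Nehari-type manifold $M$, combining Ekeland's variational principle with the radial Sobolev compactness and a strict upper bound on $B$ to recover compactness.

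First, I would verify that $M$ is a smooth codimension-two submanifold of $H$ and a natural constraint for $I$. Denote by $G_1, G_2$ the two equations defining $M$. At $(u,v)\in M$, the $2\times 2$ matrix with entries $G_i'(u,v)\cdot(u,0)$ and $G_i'(u,v)\cdot(0,v)$ must be shown non-singular; for $\be\in[-1/2,0)$ and $2<\al,r<2^*$, this is a direct computation using the Nehari identities together with the pointwise bound $|u|^{2^*/2}|v|^{2^*/2}\le\tfrac12(|u|^{2^*}+|v|^{2^*})$. Non-degeneracy yields both the $C^1$ manifold structure of $M$ and the vanishing of Lagrange multipliers, so that any $I|_M$-critical point is a critical point of $I$ on $H$.

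Second, Ekeland's principle on $M$ produces $(u_n,v_n)\in M$ with $I(u_n,v_n)\to B$ and $I'(u_n,v_n)\to 0$ in $H^*$. Boundedness in $H$ follows by combining $I(u_n,v_n)-\theta^{-1}\,I'(u_n,v_n)\cdot(u_n,v_n)$ for some $\theta\in(2,\min\{\al,r\})$, where the hypothesis $\be\ge -\tfrac12$ is used through the pointwise inequality $|u|^{2^*}+|v|^{2^*}+2\be|u|^{2^*/2}|v|^{2^*/2}\ge 0$. Extract a weak limit $(u_0,v_0)\in H$; the compact embedding $H^1_r(\R^N)\hookrightarrow L^p(\R^N)$ for $p\in(2,2^*)$ handles the subcritical terms, while the critical ones are decomposed via the Brezis--Lieb lemma, writing $u_n=u_0+w_n$, $v_n=v_0+z_n$ with $w_n,z_n\rightharpoonup 0$.

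The main obstacle is excluding semi-trivial weak limits and Aubin--Talenti bubble concentration, equivalent to a strict upper bound on $B$. I would take the radial single-equation ground states $u_1,v_1$ from \eqref{1.6}, \eqref{1.7} and project $(u_1,v_1)$ onto $M$ via a scaling $(tu_1,sv_1)$; existence of such $t,s>0$ for $\be\in[-\tfrac12,0)$ follows from a fixed-point argument on the pair of Nehari scaling equations, using the sign of $\be$ and the structure of the coupling. Estimating $B\le I(tu_1,sv_1)$ and combining with \eqref{1.8}, \eqref{1.9} and the bubble energy $\tfrac{1}{N}S^{N/2}$ rules out the scenario $B=I(u_0,v_0)+k\cdot\tfrac{1}{N}S^{N/2}$ for any $k\ge 1$, in particular the semi-trivial case where one component of the weak limit vanishes while a bubble develops in the other. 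Hence $(u_0,v_0)$ is a nontrivial radial weak solution with $I(u_0,v_0)=B$; positivity then follows by replacing $(u_0,v_0)$ with $(|u_0|,|v_0|)$ (rescaling back onto $M$ if necessary, which does not increase $I$) and applying the strong maximum principle to each equation of \eqref{1.1}.
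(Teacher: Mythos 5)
Your overall architecture matches the paper's: Ekeland's principle on $M$, a non-degeneracy/implicit-function argument to show that constrained almost-critical points are genuine Palais--Smale sequences for $I$ (the paper's matrix $F_n$, whose determinant is bounded below using $-\frac12\le\be<0$ together with uniform bounds $C_1\le\int|u_n|^{2^*},\int|v_n|^{2^*}\le C_2$ on $M$), then a Brezis--Lieb splitting and a strict energy bound to exclude the trivial and semi-trivial weak limits. Up to that point your sketch is consistent with the paper.

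The genuine gap is in the key energy estimate, namely the proof that $B<\min\{B_1+\frac1NS^{N/2},\,B_2+\frac1NS^{N/2},\,A\}$. You propose to test with $(tu_1,sv_1)$, the pair of single-equation ground states. For $\be<0$ this cannot work in general: the coupling contributes $-\frac{2\be}{2^*}(ts)^{2^*/2}\int_{\R^N}|u_1|^{\frac{2^*}{2}}|v_1|^{\frac{2^*}{2}}$, which is a \emph{positive}, order-one quantity since $u_1,v_1$ are fixed positive radial functions with overlapping supports. The best upper bound you can extract is therefore of the form $B\le B_1+B_2+c$ with $c>0$ depending on $|\be|$ and on $\int u_1^{2^*/2}v_1^{2^*/2}$, and since \eqref{1.8}--\eqref{1.9} give only the qualitative gaps $B_1,B_2<\frac1NS^{N/2}$ with no quantitative control, there is no way to conclude $B<\min\{B_1,B_2\}+\frac1NS^{N/2}$ from this. (For $\be>0$ the analogous test pair does work because the coupling \emph{lowers} the energy; that is exactly the paper's Lemma~\ref{le2.3}, Step 2.) The paper's Lemma~\ref{le3.1} instead pairs the fixed profile $u_1$ with a \emph{concentrating} truncated Aubin--Talenti bubble $v_\va=\psi U_{\va,y_0}$: then $\int|u_1|^{\frac{2^*}{2}}|v_\va|^{\frac{2^*}{2}}=o(\va^2)$, so the hostile interaction term is negligible, while the subcritical perturbation $\int|v_\va|^r\ge C\va^{N-\frac{N-2}{2}r}$ dominates the concentration errors $O(\va^2)+O(\va^{N-2})$ (the Brezis--Nirenberg mechanism), giving $\max_{t,s>0}I(tu_1,sv_\va)<B_1+\frac1NS^{\frac N2}$. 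The hypothesis $-\frac12\le\be<0$ is then used both to absorb the cross term by AM--GM and to guarantee (via $D_2^2\le\frac14 D_1D_3$) that the projection $(t_\va u_1,s_\va v_\va)$ onto $M$ exists. Without replacing your test pair by this ground-state-plus-bubble construction, the compactness argument excluding the semi-trivial limits does not close.
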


\begin{remark}\label{re1.2} By the definition of $M$, we see that the solutions obtained in Theorems 1 and 2 are least energy solutions in the radially symmetric Sobolev subspace $H=H_r^1(\R^N)\times H_r^1(\R^N)$, however, we do not know that whether the solutions we obtained are the least energy solutions in the whole space $H^1(\R^N)\times H^1(\R^N).$
\end{remark}

We try to use the Mountain Pass Theorem and Ekeland's variational principle to prove Theorems 1.1 and 1.2. Our methods are inspired by the work of \cite{cz1,cz2}, which deals with problems in bounded domains. However, since we deal with problems in $\R^N$ and the appearance of two perturbation terms $\la_1 u^{\al-1}$ and $\la_2 v^{r-1}$ in \eqref{1.1}, we will encounter some new
difficulties. First, as we know, a substantial difference between a bounded domain and the whole space $\R^N$ is that the Sobolev embedding $H^1(\R^N)\hookrightarrow L^q(\R^N)$ with $2\leq q\leq 2^*$ is not compact. But Strauss' radial Lemma tells us that the embedding $H_r^1(\R^N)\hookrightarrow L^q(\R^N), 2<q<2^*$ is compact. Thus in this paper, we will discuss problem \eqref{1.1} in $H=H_r^1(\R^N)\times H_r^1(\R^N)$ to recover the lack of compactness. Secondly, since the nonlinearity and coupling terms are of critical growth, we still face the difficulty that $H_r^1(\R^N)\hookrightarrow L^{2^*}(\R^N)$ is not compact. For the case where $\be>0$, it easily see that the functional $I$ possesses a mountain pass structure around $0\in H$, then by the Mountain Pass Theorem, $I$ has a critical point, denoted by $(u,v)$, in $H$. But $(u,v)$ may be a trivial or semi-trivial critical point of $I$. In order to prove that $(u,v)$ is nontrivial, inspired by \cite{bn,cz1,cz2}, we try to pull the energy level down below some
critical level to recover certain compactness condition. However, problem \eqref{1.1} is much more complicated comparing with \eqref{1.4} due to the effect of the perturbation terms, so some new ideas are needed in the process of pulling down the energy level;
Meanwhile, In the case where $\beta<0$, we try to follow the method used in \cite{cz2} to prove the existence of positive solutions, i.e. we try to obtain a minimizing sequence of $B$ by Ekeland's variational principle and then to show that the minimizing sequence weakly converges to a critical point of $I$ and finally to prove that the critical point is nontrivial by pulling down the energy level. However, the method used in \cite{cz2} does not work here for all $\be<0$ because of the existence of the perturbation terms and need to be improved. We succeeded in doing so by proving the existence result for $\be<0$ restricted in some suitable interval and more careful analysis.

The paper is organized as follows. In Section 2,  we will prove Theorem 1.1; In Section 3, we will give the proof of Theorem 1.2. Throughout this paper, for simplicity, we denote various positive constants as $C$ and omit
$\,dx\,$ in integration. We use ``$\rightarrow,\ \rightharpoonup$" to denote the
strong and weak convergence in the related function space
respectively and  denote $B_r(x)\triangleq\{y\in\R^N|\,|x-y|<r\}$.

\section{ Proof of Theorem 1.1}
Define
$$
\ds \mathcal{B}= \inf_{h\in \Gamma}\max_{t>0}I(h(t)),
$$
where $\Gamma=\{h\in C([0,1],H)|~h(0)=0,I(h(t))<0\}.$ Since $2<\al,r<2^*$, it is easy to check that $\mathcal{B}$ is well defined and that
$$
\ds \mathcal{B}=\inf_{\substack{(u,v)\neq(0,0)\\ (u,v)\in H}}\max_{t>0}I(tu,tv)=\inf_{(u,v)\in \mathcal{M}}I(u,v),
$$
where $\mathcal{M}$ is the Nehari manifold associated to $I$, i.e.
$$
\begin{array}{ll}
\mathcal{M}=\Big\{&\ds (u,v)\in H\setminus \{(0,0)\}|~G(u,v)\triangleq\int_{\R^N}(|\nabla u|^2+|u|^2+|\nabla v|^2+|v|^2)-\int_{\R^N}(|u|^{2^*}\\[5mm]
&~~\ds +|v|^{2^*}+2\be |u|^{\frac{2^*}{2}}|v|^{\frac{2^*}{2}})
  -\int_{\R^N}(\la_1|u|^\al+\la_2|v|^r)=0\Big\}.
 \end{array}
 $$
Note that $M\subset \mathcal{M}$, one has that $\ds \mathcal{B}\leq B$ and $\mathcal{B}>0.$

Since the nonlinearity and the coupling terms are both of critical growth in \eqref{1.1}, the existence of nontrivial solutions to problem \eqref{1.1} depends heavily
on that to the following limiting problem
\begin{equation}\label{2.1}
 \left\{
 \begin{array}{ll}
 \ds -\Delta u=|u|^{2^*-2}u+\be |u|^{\frac{2^*}{2}-2}u|v|^{\frac{2^*}{2}-1},\, &x\in \R^N, \vspace{0.2cm}\\
 -\Delta v=|v|^{2^*-2}v+\be |u|^{\frac{2^*}{2}-1}|v|^{\frac{2^*}{2}-2}v,\, &x\in \R^N, \vspace{0.2cm}\\
 u,v\,\in D^{1,2}(\R^N).
 \end{array}
 \right.
\end{equation}
Define $D\triangleq D^{1,2}(\R^N)\times D^{1,2}(\R^N)$ and a $C^1$
functional $E:D\rightarrow \R$ given by
$$
E(u,v)=\ds \frac{1}{2}\int_{\R^N}(|\nabla u|^2+|\nabla v|^2)-\frac{1}{2^*}\ds\int_{\R^N}(|u|^{2^*}+|v|^{2^*}+2\be|u|^{\frac{2^*}{2}}
|v|^{\frac{2^*}{2}}).
$$
Set
$$
\begin{array}{ll}
\mathcal{N}=\Big\{&\ds (u,v)\in D|~u\not\equiv0,v\not\equiv0,\int_{\R^N}|\nabla u|^2=\int_{\R^N}(|u|^{2^*}+\be |u|^{\frac{2^*}{2}}|v|^{\frac{2^*}{2}}),\\[5mm]
   &~~\ds \int_{\R^N}|\nabla v|^2=\int_{\R^N}(|v|^{2^*}+\be |u|^{\frac{2^*}{2}}|v|^{\frac{2^*}{2}})\Big\}
 \end{array}
 $$
 and
 $$
\begin{array}{ll}
\mathcal{N'}=\Big\{&\ds (u,v)\in D\setminus \{(0,0)\}|~ \int_{\R^N}(|\nabla u|^2+|\nabla v|^2)=\ds \int_{\R^N}(|u|^{2^*}+|v|^{2^*}+2\be |u|^{\frac{2^*}{2}}|v|^{\frac{2^*}{2}})\Big\},
 \end{array}
 $$
i.e. $\mathcal{N'}$ is the Nehari manifold associated to $E$. Similarly to $M$, we have that $\mathcal{N}\neq\emptyset$ and $\mathcal{N}\subset \mathcal{N'}.$

Denote
$$
A\triangleq \ds \inf_{(u,v)\in \mathcal{N}}E(u,v),\,\,\,\,A'\triangleq \ds \inf_{(u,v)\in \mathcal{N'}}E(u,v).
$$
Then $0<A'\leq A.$

\begin{lemma}\label{le2.1}(\cite{cz2}, Theorem 1.6, Proposition 2.1)\\
$(i)$ If $\be<0,$ then $A$ is not attained and $A=\ds \frac 2NS^{\frac N2}$.\\
$(ii)$ If $\be>0$, then \eqref{2.1} has a positive least energy solution $(U,V)\in D$ with $E(U,V)=A$, which is radially symmetric decreasing. Moreover,
$$
U(x)+V(x)\leq C(1+|x|)^{2-N},\,\,|\nabla U|+|\nabla V|\leq C(1+|x|)^{1-N}
$$
\end{lemma}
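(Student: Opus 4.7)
My plan splits along the sign of $\be$; case (i) I would handle by a direct sandwich argument, while case (ii) requires an explicit ansatz combined with concentration-compactness. For (i), the starting observation is that on $\mathcal{N}$ the energy simplifies to $E(u,v) = \frac{1}{N}\int_{\R^N}(|\nabla u|^2 + |\nabla v|^2)$, as follows by adding the two Nehari identities. Because $\be<0$, each individual Nehari identity forces $\int|\nabla u|^2 \leq \int|u|^{2^*}$, and the Sobolev inequality then yields $\int|\nabla u|^2 \geq S^{N/2}$, with the analogous bound for $v$; hence $A \geq \frac{2}{N}S^{N/2}$. For the matching upper bound I would pair two Talenti bubbles translated far apart, $(U(\cdot), U(\cdot-y_n))$ with $|y_n|\to\infty$, and rescale by factors approaching $1$ to land on $\mathcal{N}$, noting that the coupling integral vanishes in the limit while each bubble contributes $\frac{1}{N}S^{N/2}$. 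Non-attainment is then immediate: a minimizer would force equality in Sobolev for both components, i.e., both would be Aubin--Talenti bubbles, but then the Nehari relation degenerates to $\be\int u^{2^*/2}v^{2^*/2} = 0$, which is impossible for $\be<0$ and $u,v>0$.

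For (ii), the key trick is the symmetric ansatz $(sU, sU)$ where $U$ is the positive Aubin--Talenti bubble normalized by $-\Delta U = U^{2^*-1}$. The system collapses to the algebraic identity $(1+\be)s^{2^*-2} = 1$, producing a genuine positive solution with energy $\frac{2}{N}(1+\be)^{-(N-2)/2}S^{N/2}$, strictly below $\frac{2}{N}S^{N/2}$, which gives the desired upper bound on $A$. To prove attainment, I would take a minimizing sequence $(u_n,v_n) \in \mathcal{N}$; this is bounded in $D$ since $E = \frac{1}{N}\int(|\nabla u|^2 + |\nabla v|^2)$ on $\mathcal{N}$. After a standard rescaling $u_n(x) \mapsto \rho_n^{(N-2)/2}u_n(\rho_n x + x_n)$ that normalizes the $L^{2^*}$ concentration, I would apply the concentration-compactness principle of Lions: the strict inequality $A < \frac{2}{N}S^{N/2}$ rules out the two-bubble dichotomy, and positivity of $\be$ rules out vanishing of one component (which would leave only a scalar Talenti bubble at energy $\frac{1}{N}S^{N/2}$, a profile that no longer belongs to $\mathcal{N}$ once the other component disappears). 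Positivity of the limit $(U,V)$ then follows from passing to $(|U|,|V|)$ and invoking the strong maximum principle, while radial monotone symmetry follows from Schwarz symmetrization, which decreases the Dirichlet part and, by the Riesz rearrangement inequality, strictly increases $\int u^{2^*/2}v^{2^*/2}$ for $\be > 0$ unless $u$ and $v$ are already radially symmetric decreasing.

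The main obstacle is the concentration-compactness analysis in (ii), in particular excluding the dichotomy in which the mass splits into two independent bubbles drifting apart; here the strict upper bound $A < \frac{2}{N}S^{N/2}$ coming from the symmetric ansatz is exactly what powers the exclusion, since $\frac{2}{N}S^{N/2}$ is precisely the energy cost of two decoupled, non-interacting bubbles. Once compactness is secured, the pointwise estimates $U(x)+V(x) \leq C(1+|x|)^{2-N}$ and $|\nabla U|+|\nabla V| \leq C(1+|x|)^{1-N}$ are obtained by a standard bootstrap: Moser iteration gives $L^\infty$ bounds on annuli, and the Kelvin transform converts the asymptotic behavior at infinity into regularity at the origin, yielding the critical decay expected for positive solutions of $-\Delta w = w^{2^*-1}$ in $\R^N$.
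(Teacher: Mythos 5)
First, note that the paper does not prove this lemma at all: it is quoted verbatim from \cite{cz2} (Theorem 1.6 and Proposition 2.1 there), so any comparison is really against Chen--Zou's argument. Your part (i) is essentially that argument and is correct: on $\mathcal{N}$ one has $E=\frac1N\int(|\nabla u|^2+|\nabla v|^2)$, each Nehari identity together with $\be<0$ and Sobolev forces $\int|\nabla u|^2,\int|\nabla v|^2\geq S^{N/2}$, two far-separated bubbles give the matching upper bound, and equality in the Sobolev chain would make both components (multiples of) everywhere-positive Aubin--Talenti instantons, contradicting $\be\int|u|^{2^*/2}|v|^{2^*/2}=0$.

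Part (ii), however, has a genuine gap exactly at the point that makes the case $N\geq5$ delicate. Your only quantitative input is the diagonal solution $(sU,sU)$ with $s^{2^*-2}=(1+\be)^{-1}$, giving $A\leq\frac{2}{N}(1+\be)^{-\frac{N-2}{2}}S^{\frac N2}<\frac{2}{N}S^{\frac N2}$. This excludes splitting into two decoupled bubbles, but it does \emph{not} exclude the other loss of compactness: a minimizing sequence $(u_n,v_n)\subset\mathcal{N}$ with $\|\nabla v_n\|_2\to0$ while $u_n$ concentrates to a single scalar bubble, a scenario whose energy level is $\frac1N S^{\frac N2}$. Since $\frac{2}{N}(1+\be)^{-\frac{N-2}{2}}S^{\frac N2}\geq\frac1N S^{\frac N2}$ whenever $0<\be\leq 2^{\frac{2}{N-2}}-1$, your upper bound does not rule this out for small $\be$; and because $\frac{2^*}{2}<2$ when $N\geq5$, the constraint $\int|\nabla v|^2=\int|v|^{2^*}+\be\int|u|^{2^*/2}|v|^{2^*/2}$ gives no lower bound on $\|\nabla v_n\|_2$ (the coupling term scales like $\|\nabla v\|_2^{2^*/2}$ with exponent below $2$), so $\mathcal{N}$ is \emph{not} bounded away from the semitrivial configurations. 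Saying that the limiting profile ``no longer belongs to $\mathcal{N}$'' is not an exclusion --- that is precisely the failure of attainment one must disprove. The missing ingredient, which is the heart of Chen--Zou's proof for $N\geq5$, is a test-function computation with $(tU_{\va},s\varphi)$ and $s$ small: because $\frac{2^*}{2}<2$, the coupling gain of order $s^{2^*/2}$ beats the cost of order $s^2$, and one finds $A=A'<\frac1N S^{\frac N2}$ for \emph{every} $\be>0$; only with this sharper threshold does the concentration-compactness argument close. The remaining items in your sketch (symmetrization for the radial monotone profile, Moser iteration plus the Kelvin transform for the decay estimates) are standard and fine once attainment is secured.
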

and
$$A=A'.$$
\begin{lemma}\label{le2.2}(\cite{cz2}, Lemma 3.3)~~Let $u_n\rightharpoonup u, v_n\rightharpoonup v$ in $H^1(\R^N)$ as $n\rightarrow +\infty$, then passing to a subsequence, there holds
$$
\ds\lim_{n\rightarrow +\infty}\int_{\R^N}(|u_n|^{\frac{2^*}{2}}|v_n|^{\frac{2^*}{2}}-|u_n-u|^{\frac{2^*}{2}}|v_n-v|^{\frac{2^*}{2}}-|u|^{\frac{2^*}{2}}|v|^{\frac{2^*}{2}})=0.
$$
\end{lemma}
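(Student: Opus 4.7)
The plan is to prove this generalized Brezis--Lieb identity for the critical product by combining a pointwise splitting, an elementary truncation inequality, and Vitali's convergence theorem. First, since $\{u_n\},\{v_n\}$ are bounded in $H^1(\R^N)$ and hence in $L^{2^*}(\R^N)$, a diagonal subsequence argument based on Rellich--Kondrachov on an exhaustion of $\R^N$ by balls lets us assume (after passing to a subsequence) that $u_n\to u$ and $v_n\to v$ almost everywhere. In particular, the integrand converges pointwise to $0$ a.e.

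Next, writing $p=2^*/2\geq 1$, I would decompose
\begin{equation*}
|u_n|^p|v_n|^p-|u_n-u|^p|v_n-v|^p=\bigl(|u_n|^p-|u_n-u|^p\bigr)|v_n|^p+|u_n-u|^p\bigl(|v_n|^p-|v_n-v|^p\bigr),
\end{equation*}
so that $u$ and $v$ enter separately in each summand. I would then invoke the elementary inequality: for every $\varepsilon>0$ there exists $C_\varepsilon>0$ with
\begin{equation*}
\bigl||a+b|^p-|a|^p-|b|^p\bigr|\leq \varepsilon|a|^p+C_\varepsilon|b|^p,\qquad a,b\in\R,
\end{equation*}
applied with $(a,b)=(u_n-u,u)$ to control $|u_n|^p-|u_n-u|^p-|u|^p$, and analogously with the roles of $u$ and $v$ interchanged.

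To pass to $L^1$-convergence in each summand, I would apply Vitali's theorem to the cross-terms involving $|u|^p$ or $|v|^p$, both of which lie in $L^2(\R^N)$ since $u,v\in H^1(\R^N)\hookrightarrow L^{2^*}(\R^N)$. Thus $|u|^p|v_n|^p$ is equi-integrable (by absolute continuity of the integral of $|u|^{2p}$ and tightness at infinity, combined with H\"{o}lder's inequality against the $L^2$-bounded sequence $|v_n|^p$), and Vitali yields $|u|^p|v_n|^p\to|u|^p|v|^p$ in $L^1$; an analogous argument gives $|u_n-u|^p|v|^p\to 0$ in $L^1$. The remaining contributions of the form $\varepsilon|u_n-u|^p|v_n|^p$ are bounded, again by H\"{o}lder, by a fixed multiple of $\varepsilon$ uniformly in $n$, so letting $\varepsilon\to 0^+$ after $n\to\infty$ absorbs them.

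The main obstacle is the loss of compactness at the critical exponent: neither $u_n\to u$ nor $v_n\to v$ holds in $L^{2^*}(\R^N)$, so dominated convergence cannot be applied to $|u_n|^p|v_n|^p$ directly. The role of the $(\varepsilon,C_\varepsilon)$-inequality is precisely to isolate the uncontrollable factor (which appears only weighted by $\varepsilon$) from the controllable factor $|u|^p$ or $|v|^p$ (which lies in $L^2$ and is therefore amenable to Vitali-type equi-integrability arguments against the $L^2$-bounded sequences $|v_n|^p$ or $|u_n-u|^p$). A technical care point is the truncation $(|R_n|-\varepsilon|u_n-u|^p)_+$ where $R_n=|u_n|^p-|u_n-u|^p-|u|^p$: this quantity vanishes a.e., is dominated by $C_\varepsilon|u|^p\in L^2$, and thus tends to $0$ in $L^2$ by dominated convergence, which closes the argument via H\"{o}lder against $|v_n|^p$.
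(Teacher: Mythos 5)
Your argument is correct: the telescoping decomposition, the Brezis--Lieb inequality $\bigl||a+b|^p-|a|^p-|b|^p\bigr|\leq\varepsilon|a|^p+C_\varepsilon|b|^p$ with the truncation $(|R_n|-\varepsilon|u_n-u|^p)_+$, and the Vitali/weak-convergence step against the fixed $L^2$ functions $|u|^p,|v|^p$ fit together without gaps. The paper itself offers no proof --- it simply cites Lemma~3.3 of \cite{cz2} --- and your argument is essentially the standard Brezis--Lieb-type proof given there, so there is nothing further to compare.
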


\begin{lemma}\label{le2.3}~~
Let $\be>0$, then
$$
\ds \mathcal{B}<\min\{B_1,B_2,A\},
$$
where $B_1,B_2$ are given in \eqref{1.8},\eqref{1.9}.
\end{lemma}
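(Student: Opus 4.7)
The plan is to establish separately the three strict inequalities $\mathcal{B}<B_1$, $\mathcal{B}<B_2$ and $\mathcal{B}<A$, each by exhibiting a convenient test pair in $H\setminus\{(0,0)\}$ and controlling $\max_{t>0}I(tu,tv)$ via the characterization $\mathcal{B}=\inf_{(u,v)\ne(0,0)}\max_{t>0}I(tu,tv)$. The underlying arithmetic driver is $\frac{2^*}{2}=\frac{N}{N-2}<2$ for $N\ge 5$, so a perturbation whose leading term is of order $2^*/2$ in a small parameter will beat the harmful order-$2$ mass term.

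For $\mathcal{B}<B_1$, I take the test pair $(u_1,\tau\varphi)$, where $\varphi\in H_r^1(\R^N)$ is any fixed positive radial function and $\tau>0$ is small. Writing $g(t)=I(tu_1,t\tau\varphi)=J_1(tu_1)+F(t,\tau)$ with $J_1$ the scalar functional of \eqref{1.6}, one has $\max_{t>0}J_1(tu_1)=B_1$ since $u_1$ lies on the Nehari set of $J_1$. The $\tau$-terms in $F$ are of orders $\tau^2$, $\tau^{2^*}$, $\tau^{2^*/2}$ (the coupling) and $\tau^r$; for $\beta>0$ the coupling term is strictly negative and, because $\frac{2^*}{2}<\min\{2,r,2^*\}$ for $N\ge 5$, of the smallest order, so $F(t,\tau)\le -C\tau^{2^*/2}$ uniformly on any compact interval $[\delta,T]$ in $t$. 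Since $g(0)=0$ and $g(t)\to-\infty$ as $t\to\infty$ (the $-t^{2^*}$ term dominates) uniformly in small $\tau$, the maximizer $t_\tau$ of $g$ lies in a fixed compact interval for $\tau$ small, and then $\max_t g(t)=J_1(t_\tau u_1)+F(t_\tau,\tau)\le B_1-C\tau^{2^*/2}<B_1$. The bound $\mathcal{B}<B_2$ is obtained identically with the test pair $(\tau\psi,v_1)$.

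For $\mathcal{B}<A$ I use the positive radial least-energy solution $(U,V)$ of \eqref{2.1} given by Lemma \ref{le2.1}(ii); its decay $U+V\le C(1+|x|)^{2-N}$ combined with $N\ge 5$ yields $U,V\in L^2(\R^N)$ and hence $(U,V)\in H$. Define $U_\varepsilon(x)=\varepsilon^{(2-N)/2}U(x/\varepsilon)$ and $V_\varepsilon$ analogously, and consider $h(t)=I(tU_\varepsilon,tV_\varepsilon)$. The $D^{1,2}$, $L^{2^*}$ and coupling integrals are scale invariant, while the mass integrals scale as $\varepsilon^2$ and $\int U_\varepsilon^\alpha=\varepsilon^{N-(N-2)\alpha/2}\int U^\alpha$ (similarly for $V_\varepsilon$ with $r$). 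Using $(U,V)\in\mathcal N'$, which yields $\|\nabla U\|_2^2+\|\nabla V\|_2^2=\int(U^{2^*}+V^{2^*}+2\beta U^{2^*/2}V^{2^*/2})=NA$, I first discard the two negative perturbation terms, compute the explicit maximum in $t$ of the resulting simplified expression, and obtain $\max_t h(t)\le A+O(\varepsilon^2)$. Re-inserting the dropped terms at the maximizer $t_\varepsilon$ (which tends to $1$ as $\varepsilon\to 0$ by uniform convergence of $h$ to its unperturbed limit on compacts) subtracts at least $C\varepsilon^{N-(N-2)\alpha/2}+C\varepsilon^{N-(N-2)r/2}$. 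Since $\alpha,r>2$ forces both exponents strictly below $2$, this negative contribution swamps the $+O(\varepsilon^2)$ for $\varepsilon$ small, so $\max_t h(t)<A$ and hence $\mathcal{B}<A$.

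The main obstacle is the $\mathcal{B}<A$ step, a Br\'ezis--Nirenberg-type estimate requiring careful book-keeping of the $\varepsilon$-expansion at the moving maximizer $t_\varepsilon$; the crucial dimensional inequality $N-(N-2)\alpha/2<2$ reduces exactly to the hypothesis $\alpha>2$, and similarly for $r$, so the argument functions precisely under the assumptions of the lemma.
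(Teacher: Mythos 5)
Your argument is correct, and it reaches both inequalities from the same test configurations and the same exponent arithmetic as the paper ($\tfrac{2^*}{2}<2$ for $N\ge 5$ in the comparison with $B_1,B_2$; $0<N-\tfrac{N-2}{2}\al<2$, i.e.\ $2<\al<2^*$, in the comparison with $A$), but the execution is genuinely different and in fact lighter. For $\mathcal{B}<B_1$ the paper projects the curve $s\mapsto (u_1,sv_1)$ onto $\mathcal{M}$ via the Implicit Function Theorem and expands $t(s)$, $t(s)^{2^*}$, $t(s)^{\al}$, $t(s)^{r}$ to order $|s|^{2^*/2}$ before expanding the energy; you instead split $I(tu_1,t\tau\varphi)=J_1(tu_1)+F(t,\tau)$, use $\max_{t}J_1(tu_1)=B_1$, and absorb everything into the uniform bound $F(t,\tau)\le C\tau^{2}-C'\tau^{2^*/2}<0$ on a compact $t$-interval that traps the maximizer (the trapping argument being the same as the one giving \eqref{2.13}). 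This bypasses the $t(s)$ expansion entirely at no loss; the one point to state explicitly is $\int_{\R^N}|u_1|^{2^*/2}|\varphi|^{2^*/2}>0$, which holds since $u_1>0$. For $\mathcal{B}<A$ the paper truncates $(U_\va,V_\va)$ by a cut-off and carries the $O(\va^{N-2})$ and $O(\va^{N})$ errors; you note that the decay in Lemma~\ref{le2.1}(ii) with $N\ge5$ already gives $U,V\in L^2(\R^N)$, hence $(U_\va,V_\va)\in H$, so the scaling identities are exact and only the $+O(\va^2)$ mass term competes against $-C\va^{N-\frac{N-2}{2}\al}-C\va^{N-\frac{N-2}{2}r}$, which is precisely the decisive comparison in the paper's Step 1. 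Both simplifications are valid.
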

\begin{proof}~~The arguments follow from that of Lemma 3.4 in \cite{cz2}, however, due to the effect of the perturbation terms, it is more complicated and some new ideas are needed. The proof consists of two steps.

{\textbf Step 1:} we prove that $\mathcal{B}<A.$

For $\rho>0$, let $\psi\in C_0^{\infty}(B_{2\rho}(0))$ be a cut-off function with $0\leq\psi\leq1$ and $ \psi\equiv1 $ for $|x|\leq\rho.$ Recall that $(U,V)$ given in
Lemma~\ref{le2.1} (ii), for $\va>0$, we define
$$
(U_\va(x),V_\va(x))\triangleq \Big(\va^{-\frac{N-2}{2}}U(\frac{x}{\va}),\,\va^{-\frac{N-2}{2}}V(\frac{x}{\va})\Big).
$$
By direct calculation, then
$$
\ds \int_{\R^N}|\nabla U_\va|^2=\int_{\R^N}|\nabla U|^2,\,\,\int_{\R^N}|U_\va|^{2^*}=\int_{\R^N}|U|^{2^*},
$$
$$
\ds \int_{\R^N}|\nabla V_\va|^2=\int_{\R^N}|\nabla V|^2,\,\,\int_{\R^N}|V_\va|^{2^*}=\int_{\R^N}|V|^{2^*}.
$$
Denote
$$
(u_\va,v_\va)\triangleq (\psi U_\va, \psi V_\va).
$$
By the estimates given in Lemma 3.4 of \cite{cz2}, then we see that
\begin{equation}\label{2.2}
 \ds \int_{\R^N}|\nabla u_\va|^2\leq\int_{\R^N}|\nabla U|^2+O(\va^{N-2}),
\end{equation}
\begin{equation}\label{2.3}
 \ds \int_{\R^N}|u_\va|^{2^*}\geq\int_{\R^N}|U|^{2^*}+O(\va^N),
\end{equation}
\begin{equation}\label{2.4}
 \ds \int_{\R^N}|u_\va|^{\frac{2^*}{2}}|v_\va|^{\frac{2^*}{2}}\geq\int_{\R^N}|U|^{\frac{2^*}{2}}|V|^{\frac{2^*}{2}}+O(\va^N).
\end{equation}
Moreover, we still have the following ineuqality:
\begin{equation}\label{2.5}
 \ds \int_{\R^N}|u_\va|^2\leq C\va^2,
\end{equation}
\begin{equation}\label{2.6}
 \ds \int_{\R^N}|u_\va|^\al\geq C\va^{N-\frac{N-2}{2}\al}+O(\va^{\frac{N-2}{2}\al}),
\end{equation}
where $C$ denotes a positive constant. In fact, let $0<\va\ll \rho,$
\begin{eqnarray*}
 \ds\int_{\R^N}|u_\va|^2
&\leq &\ds \int_{|x|\leq 2\rho}\va^{2-N}U^2(\frac{x}{\va})
= \ds \va^2 \int_{|x|\leq \frac{2\rho}{\va}}U^2(x)\\
&\leq& \va^2\int_{\R^N}U^2(x)\leq C\va^2.
\end{eqnarray*}
By Lemma~\ref{le2.1}(ii), we have that
\begin{eqnarray*}
 \ds\int_{\R^N}|u_\va|^\al
&\geq &\ds \int_{|x|\leq \rho}\va^{-\frac{N-2}{2}\al}|U(\frac{x}{\va})|^\al
= \ds \int_{|x|\leq \frac{\rho}{\va}}\va^{N-\frac{N-2}{2}\al}|U(x)|^\al\\
&=&\va^{N-\frac{N-2}{2}\al}(\int_{\R^N}|U(x)|^\al-\int_{|x|\geq\frac{\rho}{\va}}|U(x)|^\al)\\
&\geq&C\va^{N-\frac{N-2}{2}\al}-C\va^{N-\frac{N-2}{2}\al}\int_{\frac{\rho}{\va}}^{\infty}r^{N-1-(N-2)\al}dr\\
&=&C\va^{N-\frac{N-2}{2}\al}+O(\va^{\frac{N-2}{2}\al}).
\end{eqnarray*}
Similarly,
\begin{equation}\label{2.7}
 \ds \int_{\R^N}|\nabla v_\va|^2\leq\int_{\R^N}|\nabla V|^2+O(\va^{N-2}),
\end{equation}
\begin{equation}\label{2.8}
 \ds \int_{\R^N}|v_\va|^{2^*}\geq\int_{\R^N}|V|^{2^*}+O(\va^N),
\end{equation}
\begin{equation}\label{2.9}
 \ds \int_{\R^N}|v_\va|^2\leq C\va^2,
\end{equation}
\begin{equation}\label{2.10}
 \ds \int_{\R^N}|v_\va|^r\geq C\va^{N-\frac{N-2}{2}r}+O(\va^{\frac{N-2}{2}r}),
\end{equation}
where $C$ denotes a positive constant. Recall that $E(U,V)=A$ and $(U,V)\in \mathcal{N}$, then
\begin{equation}\label{2.11}
\ds NA=\int_{\R^N}|\nabla U|^2+|\nabla V|^2=\int_{\R^N}(|U|^{2^*}+|V|^{2^*}+2\be |U|^{\frac{2^*}{2}}|V|^{\frac{2^*}{2}}).
\end{equation}
For any $t>0$, set
$$
\begin{array}{ll}
g_\va(t)\triangleq I(tu_\va,tv_\va)=&\ds \frac{t^2}{2} \|(u_\va,v_\va)\|^2-\frac{t^{2^*}}{2^*}\int_{\R^N}(|u_\va|^{2^*}+|v_\va|^{2^*}+2\be |u_\va|^{\frac{2^*}{2}}|v_\va|^{\frac{2^*}{2}})\\
&~~~~~~~~~~~~~~~~~~~~~~~~~~~~~~~~~~~~~~~~~~~~~~~~~~~~~~~~~~~~~~\ds -\frac{\la_1 t^\al}{\al}\int_{\R^N}|u_\va|^\al-\frac{\la_2 t^r}{r}\int_{\R^N}|v_\va|^r.
\end{array}$$
Since $\al,r>2$, we easily see that $g_\va(t)$ has a unique critical point $t_{\va}\triangleq t_{u_\va,v_\va}>0$, which corresponds to its maximum, i.e.
$$
I(t_{\va}u_\va,t_{\va}v_\va)=\max\limits_{t>0}I(tu_\va,tv_\va).
$$
By $g_\va'(t_{\va})=0$, then
\begin{equation}\label{2.12}
(t_{\va}u_\va,t_{\va}v_\va)\in \mathcal{M}.
\end{equation}
 We claim that $\{t_\va\}_{\va>0}$ is bounded from below by a positive constant. Otherwise, there exists a sequence
$\{\va_n\}\subset\R_+$ satisfying
$\lim\limits_{n\rightarrow\infty}t_{\va_n}=0$ and
$I(t_{\va_n}u_{\va_n},t_{\va_n}v_{\va_n})=\max\limits_{t>0}
I(tu_{\va_n},tv_{\va_n})$, then $0<
\mathcal{B}\leq\lim\limits_{n\rightarrow\infty}I(t_{\va_n}u_{\va_n},t_{\va_n}v_{\va_n})=0$,
which is impossible. So there exists $C>0$ independent of
$\varepsilon$ satisfying
\begin{equation}\label{2.13}
t_\varepsilon>C>0~~\hbox{for~all}~\varepsilon>0.
\end{equation}
Since $N\geq 5$ and $2<\al,r<2^*$,
\begin{equation}\label{2.23}
0<N-\frac{N-2}{2}\al<2<N-2<\frac{N-2}2\al<N
\end{equation}
and
\begin{equation}\label{2.24}
0<N-\frac{N-2}{2}r<2<N-2<\frac{N-2}{2}r<N.
\end{equation}
Then by \eqref{2.2}-\eqref{2.24}, we see that
\begin{eqnarray*}
 \ds I(t_\va u_\va,t_\va v_\va)
&=&\ds \frac{t_\va^2}{2}\|(u_\va,v_\va)\|^2-\frac{t_\va^{2^*}}{2^*}\ds\int_{\R^N}(|u_\va|^{2^*}+|v_\va|^{2^*}+2\be |u_\va|^{\frac{2^*}{2}}
|v_\va|^{\frac{2^*}{2}})\\
&&-\frac{\la_1t_\va^\al}{\al}\int_{\R^N}|u_\va|^\al-\frac{\la_2t_\va^r}{r}\int_{\R^N}|v_\va|^r\\
&\leq&\frac{t_\va^2}{2}[\int_{\R^N}(|\nabla U|^2+|\nabla V|^2)+C\va^2+O(\va^{N-2})]-\frac{t_\va^{2^*}}{2^*}\ds[\int_{\R^N}(|U|^{2^*}|+|V|^{2^*}\\
&&+2\beta|U|^{\frac{2^*}{2}}
|V|^{\frac{2^*}{2}})+O(\va^N)]-\frac{\la_1t_\va^\al}{\al}[C\va^{N-\frac{N-2}{2}\al}+O(\va^{\frac{N-2}{2}\al})]\\
&&-\frac{\la_2t_\va^r}{r}[C\va^{N-\frac{N-2}{2}r}+O(\va^{\frac{N-2}{2}r})]\\
&\leq&\frac{t_\va^2}{2}[NA+C\va^2+O(\va^{N-2})]-\frac{t_\va^{2^*}}{2^*}[NA+O(\va^N)]\\
&&-\frac{\la_1C}{\al}[C\va^{N-\frac{N-2}{2}\al}+O(\va^{\frac{N-2}{2}\al})]-\frac{\la_2C}{r}[C\va^{N-\frac{N-2}{2}r}+O(\va^{\frac{N-2}{2}r})]\\
&\leq&\frac{1}{N}[NA+C\va^2+O(\va^{N-2})]\left(\frac{NA+C\va^2+O(\va^{N-2})}{NA+O(\va^N)}\right)^{\frac{N-2}{2}}\\
&&-[C\va^{N-\frac{N-2}{2}\al}+O(\va^{\frac{N-2}{2}\al})]-[C\va^{N-\frac{N-2}{2}r}+O(\va^{\frac{N-2}{2}r})]\\
&\leq&A-C\va^{N-\frac{N-2}{2}\al}-C\va^{N-\frac{N-2}{2}r}+O(\va^{\frac{N-2}{2}\al})+O(\va^{\frac{N-2}{2}r})+C\va^2+O(\va^{N-2})\\
&<& A~~~~~~~\ \ \ \ \ ~~~\text{for}~\va>0~\text{small enough}.
\end{eqnarray*}
Hence, by \eqref{2.12}, there holds $\ds \mathcal{B}<A$ for $\va>0$ small enough.

{\textbf Step 2.} we prove that $\ds \mathcal{B}<B_1.$

For $u_1,v_1$ given in \eqref{1.6}\eqref{1.7}, we define a function $F:\R^2\rightarrow\R^1$ by
$$F(t,s)=\langle I^\prime(tu_1,tsv_1),(tu_1,tsv_1)\rangle.$$
By the definition of $u_1$, it is easy to check that $F(1,0)=0$ and $\frac{\partial F(t,s)}{\partial t}(1,0)\neq0$. Then by the Implicit Function Theorem, there exists a $\delta>0$ and a function $t(s)$ such that
$$t(s)\in C^1(-\delta,\delta),\ \ t(0)=1,\ \ t^\prime(s)=\ds -\frac{F_s(t,s)}{F_t(t,s)}$$
and
$$F(t(s),s)=0,~~\forall~s\in(-\delta,\delta),$$
which implies that
\begin{equation}\label{2.14}
\ds (t(s)u_1,t(s)sv_1)\in \mathcal{M},~~\forall~s\in(-\delta,\delta).
\end{equation}

By direct calculation, we obtain that
\begin{eqnarray*}
 \ds F_s(t,s)&=&2st^2\int_{\R^N}(|\nabla v_1|^2+|v_1|^2)-2^*t^{2^*}s^{2^*-1}\int_{\R^N}|v_1|^{2^*}-
2^*\be t^{2^*}s^{\frac{2^*}{2}-1}\int_{\R^N}|u_1|^{\frac{2^*}{2}}|v_1|^{\frac{2^*}{2}}\\
&&-\la_2rs^{r-1}t^r\int_{\R^N}|v_1|^r
\end{eqnarray*}
and
\begin{eqnarray*}
 \ds F_t(t,s)&=&2t\|(u_1,s v_1)\|^2-2^*t^{2^*-1}[\int_{\R^N}|u_1|^{2^*}+s^{2^*}\int_{\R^N}|v_1|^{2^*}+
2\be s^{\frac{2^*}{2}}\int_{\R^N}|u_1|^{\frac{2^*}{2}}|v_1|^{\frac{2^*}{2}}]\\
&&-\al\la_1 t^{\al-1}\int_{\R^N}|u_1|^\al-r\la_2t^{r-1}s^r\int_{\R^N}|v_1|^r.
\end{eqnarray*}
Since $N\geq 5$ and $r\in (2,2^*)$, $\frac{2^*}{2}<2<r<2^*$, then we have that
$$
\ds\lim_{s\rightarrow 0}\frac{t'(s)}{|s|^{\frac{2^*}{2}-2}s}=\lim_{s\rightarrow 0}\frac{-F_s/F_t}{|s|^{\frac{2^*}{2}-2}s}=\ds\frac{-2^*\be \int_{\R^N}|u_1|^{\frac{2^*}{2}}|v_1|^{\frac{2^*}{2}}}{(2^*-2)\int_{\R^N}|u_1|^{2^*}
+\la_1(\al-2) \int_{\R^N}|u_1|^\al},
$$
i.e.
$$
\ds t'(s)=\ds\frac{-2^*\be \int_{\R^N}|u_1|^{\frac{2^*}{2}}|v_1|^{\frac{2^*}{2}}}{(2^*-2) \int_{\R^N}|u_1|^{2^*}
+\la_1(\al-2)\int_{\R^N}|u_1|^\al}|s|^{\frac{2^*}{2}-2}s(1+o(1))~~\text{as}~s\rightarrow 0.
$$
So
$$
\ds t(s)=\ds 1-\frac{2\be \int_{\R^N}|u_1|^{\frac{2^*}{2}}|v_1|^{\frac{2^*}{2}}}{(2^*-2) \int_{\R^N}|u_1|^{2^*}
+\la_1(\al-2)\ds \int_{\R^N}|u_1|^\al}|s|^{\frac{2^*}{2}}(1+o(1))~~\text{as}~s\rightarrow 0,
$$
which implies that
\begin{equation}\label{2.15}
\ds t^{2^*}(s)=\ds 1-\frac{22^*\be \int_{\R^N}|u_1|^{\frac{2^*}{2}}|v_1|^{\frac{2^*}{2}}}{(2^*-2) \int_{\R^N}|u_1|^{2^*}
+\la_1(\al-2) \int_{\R^N}|u_1|^\al}|s|^{\frac{2^*}{2}}(1+o(1))~~\text{as}~s\rightarrow 0,
\end{equation}
\begin{equation}\label{2.16}
\ds t^{\al}(s)=\ds 1-\frac{2\al\be \int_{\R^N}|u_1|^{\frac{2^*}{2}}|v_1|^{\frac{2^*}{2}}}{(2^*-2) \int_{\R^N}|u_1|^{2^*}
+\la_1(\al-2) \int_{\R^N}|u_1|^\al}|s|^{\frac{2^*}{2}}(1+o(1))~~\text{as}~s\rightarrow 0
\end{equation}
and
\begin{equation}\label{2.17}
\ds t^{r}(s)=\ds 1-\frac{2r\be \int_{\R^N}|u_1|^{\frac{2^*}{2}}|v_1|^{\frac{2^*}{2}}}{(2^*-2) \int_{\R^N}|u_1|^{2^*}
+\la_1(\al-2) \int_{\R^N}|u_1|^\al}|s|^{\frac{2^*}{2}}(1+o(1))~~\text{as}~s\rightarrow 0.
\end{equation}
Thus by \eqref{1.8},\eqref{2.14}-\eqref{2.17} and $\frac{2^*}{2}<2<r$, we see that for $\forall~s\in(-\delta,\delta)$,
\begin{eqnarray*}
 \ds \mathcal{B}
&\leq& I(t(s)u_1,t(s)sv_1)\\
&=&\ds \frac{t(s)^2}{2}\int_{\R^N}[(|\nabla u_1|^2+|u_1|^2)+s^2(|\nabla v_1|^2+|v_1|^2)]-\frac{t(s)^{2^*}}{2^*}\ds\int_{\R^N}(|u_1|^{2^*}+|s|^{2^*}|v_1|^{2^*}\\
&&+2\be |s|^{\frac{2^*}{2}}|u_1|^{\frac{2^*}{2}}|v_1|^{\frac{2^*}{2}})-
\frac{\la_1t(s)^\al}{\al}\int_{\R^N}|u_1|^\al-\frac{\la_2t(s)^r}{r}\int_{\R^N}|s|^r|v_1|^r\\
&=&\ds
(\frac12-\frac{1}{2^*})t(s)^{2^*}\int_{\R^N}(|u_1|^{2^*}+|s|^{2^*}|v_1|^{2^*}+2\be |s|^{\frac{2^*}{2}}|u_1|^{\frac{2^*}{2}}|v_1|^{\frac{2^*}{2}})\\
&&+(\frac12-\frac1\al)\la_1t(s)^\al\int_{\R^N}|u_1|^\al+(\frac12-\frac1r)\la_2t(s)^r\int_{\R^N}|s|^r|v_1|^r\\
&=&(\frac12-\frac{1}{2^*})\left[1-\frac{22^*\be \int_{\R^N}|u_1|^{\frac{2^*}{2}}|v_1|^{\frac{2^*}{2}}}{(2^*-2)\int_{\R^N}|u_1|^{2^*}
+\la_1(\al-2) \int_{\R^N}|u_1|^\al}|s|^{\frac{2^*}{2}}(1+o(1))\right]\\
&&\times\ds \int_{\R^N}(|u_1|^{2^*}+|s|^{2^*}|v_1|^{2^*}+2\be |s|^{\frac{2^*}{2}}|u_1|^{\frac{2^*}{2}}|v_1|^{\frac{2^*}{2}})\\
&&+\left[1-\frac{2\al\be \int_{\R^N}|u_1|^{\frac{2^*}{2}}|v_1|^{\frac{2^*}{2}}}{(2^*-2) \int_{\R^N}|u_1|^{2^*}
+\la_1(\al-2) \int_{\R^N}|u_1|^\al}|s|^{\frac{2^*}{2}}(1+o(1))\right]\\
&&~~\times(\frac12-\frac1\al)\la_1\int_{\R^N}|u_1|^\al\\
&&+\left[1-\frac{2r\be \int_{\R^N}|u_1|^{\frac{2^*}{2}}|v_1|^{\frac{2^*}{2}}}{(2^*-2) \int_{\R^N}|u_1|^{2^*}
+\la_1(\al-2) \int_{\R^N}|u_1|^\al}|s|^{\frac{2^*}{2}}(1+o(1))\right]\\
&&~~\times(\frac12-\frac1r)\la_2\int_{\R^N}|s|^r|v_1|^r\\
&\leq&\ds (\frac{1}{2}-\frac{1}{2^*})\int_{\R^N}|u_1|^{2^*}+(\frac12-\frac1\al)\la_1\int_{\R^N}|u_1|^\al
-C|s|^{\frac{2^*}{2}}+o(|s|^{\frac{2^*}{2}})\ \ (\exists~C>0)\\
&<&\ds (\frac12-\frac{1}{2^*})\int_{\R^N}|u_1|^{2^*}+(\frac12-\frac1\al)\la_1\int_{\R^N}|u_1|^\al=B_1,\ \ \ \ \text{as}~~|s|>0~\text{small~enough},
\end{eqnarray*}
where $C>0$ is a constant independent of $s$. Hence $\ds \mathcal{B}<B_1$.

Similarly, we have that $\ds \mathcal{B}<B_2.$ Hence the proof of the Lemma is completed.
\end{proof}

\noindent $\textbf{Proof of Theorem 1.1}$

Since $\be>0$, it is easy to check that $I(u,v)$ possesses a mountain pass structure, then by the Mountain Pass Theorem in \cite{ar,w1},
there exists a sequence $\{(u_n,v_n)\}\subset H$ such that
$$
\ds\lim_{n\rightarrow +\infty}I(u_n,v_n)=\mathcal{B},\,\,\lim_{n\rightarrow +\infty}I'(u_n,v_n)=0.
$$
It is standard to see that $\{(u_n,v_n)\}$ is bounded in $H$, so we may assume that $(u_n,v_n)\rightharpoonup (u,v)$ in $H$ for some $(u,v)\in H$. Set $\om_n=u_n-u,\sigma_n=v_n-v$. Then
$$\om_n\rightharpoonup 0, \ \ \ \sigma_n\rightharpoonup0\ \ \ \text{in}~H^1_r(\R^N),$$
$$\om_n\rightharpoonup 0, \ \ \ \sigma_n\rightharpoonup0\ \ \ \text{in}~L^{2^*}(\R^N)$$
and
$$\om_n\rightarrow 0\ \ \ \text{in}~L^{\al}(\R^N),\ \ \ \sigma_n\rightarrow0\ \ \ \text{in}~L^{r}(\R^N).$$
Hence by Brezis-Lieb lemma and Lemma~\ref{le2.2}, we have that
$I'(u,v)=0$ and
\begin{equation}\label{2.18}
\ds\int_{\R^N}|\nabla \omega_n|^2-\int_{\R^N}(|\om_n|^{2^*}+\be|\om_n|^{\frac{2^*}{2}}|\sigma_n|^{\frac{2^*}{2}})=o(1),
\end{equation}
\begin{equation}\label{2.19}
\ds\int_{\R^N}|\nabla \sigma_n|^2-\int_{\R^N}(|\sigma_n|^{2^*}+\be|\om_n|^{\frac{2^*}{2}}|\sigma_n|^{\frac{2^*}{2}})=o(1)
\end{equation}
and
\begin{equation}\label{2.20}
I(u_n,v_n)=I(u,v)+E(\om_n,\sigma_n)+o(1).
\end{equation}
Passing to a subsequence, we may assume that
$$
\ds\lim_{n\rightarrow +\infty}\int_{\R^N}|\nabla \om_n|^2=b_1,\,\,
\lim_{n\rightarrow +\infty}\int_{\R^N}|\nabla \sigma_n|^2=b_2.
$$
Then by \eqref{2.18},\eqref{2.19}, we have that
\begin{equation}\label{2.21}
E(\om_n,\sigma_n)=\frac1N(b_1+b_2)+o(1).
\end{equation}
Letting $n\rightarrow +\infty$ in \eqref{2.20}, we have that
\begin{equation}\label{2.22}
0\leq I(u,v)\leq I(u,v)+\frac1N(b_1+b_2)=\ds\lim_{n\rightarrow +\infty}I(u_n,v_n)=\mathcal{B}.
\end{equation}

\textbf{Case 1:} $u\equiv0,v\equiv0$.

By \eqref{2.22}, we have that $0<N\mathcal{B}=b_1+b_2<+\infty$, then we may assume that $(\om_n,\sigma_n)\neq(0,0)$ for $n$ large. By the definition of $\mathcal{N'}$ and \eqref{2.18},\eqref{2.19}, similar to the proof of \eqref{2.12}, we see that there exists a sequence $\{t_n\}\subset\R_+$ such that $(t_n\om_n,t_n\sigma_n)\in \mathcal{N'}$ and $t_n\rightarrow 1$ as $n\rightarrow +\infty$. Then by \eqref{2.18},\eqref{2.21} and Lemma 2.1 (ii), we have that
$$
\ds \mathcal{B}=\frac1N(b_1+b_2)=\lim_{n\rightarrow +\infty}E(\om_n,\sigma_n)=\lim_{n\rightarrow +\infty}E(t_n\om_n,t_n\sigma_n)\geq A'= A,
$$
which contradicts to Lemma~\ref{le2.3}. So Case 1 is impossible.

\textbf{Case 2:} $u\not\equiv 0,v\equiv0$ or $u\equiv0,v\not\equiv 0.$

Without loss of generality, we may assume that $u\not\equiv 0,v\equiv0$. Them $u\in H^1_r(\R^N)$ is a nontrivial solution of
$-\Delta u+u=|u|^{2^*-2}u+\la_1|u|^{\al-2}u$, and so $\ds \mathcal{B}\geq I(u,0)\geq B_1$, which contradicts to Lemma~\ref{le2.3}. so Case 2 is impossible.

Since Case 1 and Case 2 are both impossible, we see that $u\not\equiv 0,v\not\equiv 0$. By $I'(u,v)=0$, then we have that $(u,v)\in M$. By $\ds \mathcal{B}\leq B$ and \eqref{2.22}, we have that
$$\ds I(u,v)=\mathcal{B}=B.$$
Moreover, it is easy to see that $(|u|,|v|)\in M\subset \mathcal{M}$ and $I(|u|,|v|)=\mathcal{B}=B$ since the functional $I$ and the manifolds $M$ and $\mathcal{M}$ are symmetric, hence we may assume that such a minimizer of $B$ does not change sign, i.e. $u\geq0,v\geq0$. This means that $(u,v)\in H$ is a nontrivial nonnegative solution of \eqref{1.1} with $I(u,v)=B$. By the maximum principle, we see that $u(x),v(x)>0$ for all $x\in\R^N$. Thus, $(u,v)$ is a positive solution of \eqref{1.1} in $H$ with $I(u,v)=B$. This completes the proof of Theorem~\ref{th1.1}.

\section{ Proof of Theorem 1.2}
For $\va>0$ and $y\in \R^N$, the following Aubin-Talenti instanton $U_{\va,y}\in D^{1,2}(\R^N)$ (see \cite{at,tg})
$$
U_{\va,y}(x)=\ds [N(N-2)]^{\frac{N-2}{4}}\Big(\frac{\va}{\va^2+|x-y|^2}\Big)^{\frac{N-2}{2}}.
$$
Then $U_{\va,y}$ solves $-\Delta u=|u|^{2^*-2}u$ in $\R^N$ and
\begin{equation}\label{3.1}
\ds\int_{\R^N}|\nabla U_{\va,y}|^2=\int_{\R^N}|U_{\va,y}|^{2^*}=S^{\frac N2}.
\end{equation}
Furthermore, $\{U_{\va,y}:\va>0,y\in \R^N\}$ contains all positive solutions of the equation  $-\Delta u=|u|^{2^*-2}u$ in $\R^N$.

As has been mentioned in Section 1, $u_1$, $v_1\in H^1_r(\R^N)$ are positive least energy radial solutions of \eqref{1.6} and \eqref{1.7} respectively, moreover, by the standard regularity arguments, $u_1,v_1\in C(\R^N).$
\begin{lemma}\label{le3.1}~~
Suppose that  $-\frac12\leq\be<0$, then we have that
$$
\ds B<\min\{B_1+\frac1N S^{\frac N2},B_2+\frac1N S^{\frac N2},A\}.
$$
\end{lemma}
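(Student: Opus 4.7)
The plan is to build a test pair of the form $(t(\va)u_1,\, s(\va)U_\va)\in M$, where $U_\va$ is a truncated Aubin--Talenti instanton, and to show its energy lies strictly below $B_1+\frac{1}{N}S^{N/2}$ when $\va>0$ is small. By exchanging the roles of $(u_1,\al)$ and $(v_1,r)$, the same argument will yield $B<B_2+\frac{1}{N}S^{N/2}$. Since $\be<0$ forces $A=\frac{2}{N}S^{N/2}$ by Lemma~\ref{le2.1}(i) and $B_1,B_2<\frac{1}{N}S^{N/2}$ by \eqref{1.8}--\eqref{1.9}, the estimate $B<A$ is then automatic from $B<B_1+\frac{1}{N}S^{N/2}<\frac{2}{N}S^{N/2}=A$.

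Let $\psi\in C_0^\infty(B_{2\rho}(0))$ be a radial cutoff with $\psi\equiv 1$ on $B_\rho(0)$, and set $U_\va:=\psi\,U_{\va,0}\in H^1_r(\R^N)$. I would apply the Implicit Function Theorem to the pair of Nehari equations $G_1=G_2=0$ cutting out $M$, where $G_i$ denotes the residual of the $i$-th defining identity of $M$ evaluated at $(tu_1,sU_\va)$. At $(t,s)=(1,1)$ the residuals are small, using the Nehari identity for $u_1$ and the near-Nehari behavior of the instanton: $G_1=-\be\int u_1^{2^*/2}U_\va^{2^*/2}=O(\va^{N/2}|\log\va|)$, and $G_2=O(\va^2)+O(\va^{N-r(N-2)/2})$. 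The Jacobian in $(t,s)$ is asymptotically diagonal with diagonal entries converging to $-(2^*-2)\int u_1^{2^*}-\la_1(\al-2)\int u_1^\al$ and $-\frac{4}{N-2}S^{N/2}$, both bounded away from zero; IFT thus produces $(t(\va),s(\va))\to(1,1)$ with $|t(\va)-1|+|s(\va)-1|=O(\va^{N-r(N-2)/2})$.

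The energy decomposes as
$$I(tu_1,sU_\va)=I_u(tu_1)+I_v(sU_\va)-\frac{2\be}{2^*}t^{2^*/2}s^{2^*/2}\int u_1^{2^*/2}U_\va^{2^*/2},$$
where $I_u(u_1)=B_1$. Standard single-bubble Aubin--Talenti estimates (analogues of \eqref{2.2}--\eqref{2.10}) yield $\int|\nabla U_\va|^2=S^{N/2}+O(\va^{N-2})$, $\int U_\va^{2^*}=S^{N/2}+O(\va^N)$, $\int U_\va^2=O(\va^2)$, $\int U_\va^r\geq C\va^{N-r(N-2)/2}-O(\va^{r(N-2)/2})$, and $\int u_1^{2^*/2}U_\va^{2^*/2}=O(\va^{N/2}|\log\va|)$. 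Substituting at $(t,s)=(1,1)$ gives
$$I(u_1,U_\va)\leq B_1+\frac{1}{N}S^{N/2}-C\va^{N-\frac{r(N-2)}{2}}+O(\va^2)+O(\va^{N/2}|\log\va|),$$
the negative leading term coming from $-\frac{\la_2}{r}\int U_\va^r$. Since $(t(\va),s(\va))$ is a critical point of $\phi(t,s):=I(tu_1,sU_\va)$ whose Hessian at the limit $(1,1)$ is close to a negative multiple of the identity (because $2^*,\al,r>2$), Taylor expansion gives $I(t(\va)u_1,s(\va)U_\va)=I(u_1,U_\va)+O(\va^{2(N-r(N-2)/2)})$.

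The crux is the exponent comparison. For $r\in(2,2^*)$ and $N\geq 5$ one has $r>N/(N-2)$, hence $N-\frac{r(N-2)}{2}<\min\{2,\,N/2,\,2(N-r(N-2)/2)\}$, so the leading negative term $-C\va^{N-r(N-2)/2}$ dominates every positive correction as $\va\to 0^+$. This yields $B\leq I(t(\va)u_1,s(\va)U_\va)<B_1+\frac{1}{N}S^{N/2}$. The main technical obstacle I foresee is verifying that the IFT-induced correction is genuinely quadratic in the displacement $|(t,s)-(1,1)|$, which reduces to confirming non-degeneracy of the Hessian of $\phi$ at $(1,1)$ in the limit; the constraint $|\be|\leq\tfrac{1}{2}$ keeps the off-diagonal coupling constants harmlessly bounded, though it is the smallness of the coupling integral itself (not $|\be|$) that drives the estimate.
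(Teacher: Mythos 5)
Your proposal is correct and arrives at the same test family $(tu_1,s\,\psi U_{\va,0})$, but the mechanism is genuinely different from the paper's. The paper never localizes the point of $M$ on this family: it first proves the \emph{global} bound $\max_{t,s>0}I(tu_1,sv_\va)<B_1+\frac1N S^{\frac N2}$ by splitting off the coupling term with Young's inequality --- this is where $|\be|\le\frac12$ enters, via $2|\be|t^{2^*/2}s^{2^*/2}\int|u_1|^{2^*/2}|v_\va|^{2^*/2}\le\frac{t^{2^*}}{2}\int|u_1|^{2^*}+\frac{s^{2^*}}{2}\int|v_\va|^{2^*}$ --- so as to confine the maximizer to a region $t\le t_0$, $s\ge C_0$ where the coupling integral is $s^2 o(\va^2)$; it then separately produces $(t_\va,s_\va)$ with $(t_\va u_1,s_\va v_\va)\in M$ by reducing the two constraint identities to a single scalar equation $G(t)=0$ on $t>1$ solved by the intermediate value theorem, using $|\be|\le\frac12$ a second time through $D_2^2\le\frac14 D_1D_3$. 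You instead pin the constrained point near $(1,1)$ and Taylor-expand; your key observations are sound: the two identities defining $M$ are exactly $t\,\partial_t\phi=s\,\partial_s\phi=0$ for $\phi(t,s)=I(tu_1,sU_\va)$, the limiting Jacobian is diagonal and nondegenerate, and the exponent comparison $N-\frac{r(N-2)}{2}<\min\{2,\frac N2,2(N-\frac{r(N-2)}{2})\}$ holds because $r>\frac{N}{N-2}$ when $N\ge5$, so the negative term from $-\frac{\la_2}{r}\int U_\va^r$ dominates all corrections. Two remarks. First, since the residual at $(1,1)$ is nonzero (of order $\va^{N-\frac{r(N-2)}{2}}$), what you invoke must be the quantitative (Newton--Kantorovich type) inverse function theorem with a uniformly invertible, uniformly Lipschitz Jacobian on a fixed neighborhood; this is standard but should be stated, and it is the one technical point your sketch leaves implicit. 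Second, your argument nowhere uses $|\be|\le\frac12$ --- only the smallness of $\int u_1^{2^*/2}U_\va^{2^*/2}=O(\va^{N/2}|\log\va|)$ --- so it in fact yields the lemma for every $\be<0$, which is stronger than the paper's version; the restriction to $[-\frac12,0)$ would then be needed only in the coercivity/nondegeneracy arguments of the subsequent lemma and the $\det(F_n)$ computation. A small additional point in your favor: you center the instanton at the origin with a radial cutoff, which is required since $M\subset H^1_r(\R^N)\times H^1_r(\R^N)$, whereas the paper's $v_\va=\psi U_{\va,y_0}$ with general $y_0$ is not radial as written.
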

\begin{proof} The proof is similar to that of Lemma 3.1 in \cite{cz2}, however, due to the effect of the perturbation terms, the arguments need to be slightly improved. We give its detailed proof.

Let $-\frac{1}{2}\leq\be<0$, consider the following function
$$F(t)=\frac{t^2}{2}\int_{\R^N}(|\nabla u_1|^2+|u_1|^2)-\frac{t^{2^*}}{22^*}\int_{\R^N}|u_1|^{2^*}-
\frac{\la_1t^\al}{\al}\int_{\R^N}|u_1|^\al+\frac{2^{\frac{N-2}{2}}}{N}S^{\frac N2},\ \ t>0.$$
Then by $\al>2$, there exists a $t_0>0$ such that
\begin{equation}\label{3.2}
F(t)<0,\ \ \ \forall~t>t_0.
\end{equation}

For $y_0\in\R^N,R>0$, let
$\psi\in C_0^\infty(B_{2R}(y_0))$ be a cut-off function with $0\leq\psi\leq 1$ and $\psi\equiv 1$ for $|x-y_0|\leq R$. Define
$v_\va=\psi U_{\va,y_0}$, where $U_{\va,y_0}$ is defined in \eqref{3.1}. Then by \cite{bn}, we have the
following estimates:
\begin{equation}\label{3.3}
\ds\int_{\R^N}|\nabla v_\va|^2=S^{\frac N2}+O(\va^{N-2}),\ \ \ \int_{\R^N}|v_\va|^{2^*}=S^{\frac N2}+O(\va^N),\,\,
\end{equation}
\begin{equation}\label{3.5}
\ds\int_{\R^N}| v_\va|^2\leq C\va^2,\ \ \
\int_{\R^N}| v_\va|^r\geq C\va^{N-\frac{N-2}{2}r}+O(\va^{\frac{N-2}{2}r})
\end{equation}
and
\begin{equation}\label{3.6}
\begin{array}{ll}
\ds \int_{\R^N}|v_\va|^{\frac{2^*}{2}}&\leq
\ds\int_{B(y_0,2R)}|U_{\va, y_0}|^{\frac{2^*}{2}}\leq  \ds C\int_{B(0,2R)}\Big(\frac{\va}{\va^2+|x|^2}\Big)^{\frac N2}\\
&\leq \ds C\va^{\frac N2}(\ln{\frac{2R}{\va}}+1)=o(\va^2),
\end{array}
\end{equation}
where $C>0$ is a constant.

Since $|\be|\leq\frac12$, we have for any $t,s>0$ that
$$
\ds 2|\be|t^{\frac{2^*}{2}}s^{\frac{2^*}{2}}\int_{\R^N}|u_1|^{\frac{2^*}{2}}|v_\va|^{\frac{2^*}{2}}
\leq \frac{t^{2^*}}{2}\int_{\R^N}|u_1|^{2^*}+\frac{s^{2^*}}{2}\int_{\R^N}|v_\va|^{2^*}
$$
and then
\begin{equation}\label{3.8}
\begin{array}{ll}
 \ds &I(tu_1,sv_\va)\\
=&\ds \frac{t^2}{2}\int_{\R^N}(|\nabla u_1|^2+|u_1|^2)+\frac{s^2}{2}\int_{\R^N}(|\nabla v_\va|^2+|v_\va|^2)\\
&-\ds \frac{1}{2^*}\int_{\R^N}(t^{2^*}|u_1|^{2^*}+s^{2^*}|v_1|^{2^*}+2\be t^{\frac{2^*}{2}}s^{\frac{2^*}{2}}|u_1|^{\frac{2^*}{2}}|v_\va|^{\frac{2^*}{2}})\\
&-\ds\frac{\la_1t^\al}{\al}\int_{\R^N}|u_1|^\al-\frac{\la_2s^r}{r}\int_{\R^N}|v_\va|^r\\
\leq&\ds\Big[\frac{t^2}{2}\int_{\R^N}(|\nabla u_1|^2+|u_1|^2)-\frac{t^{2^*}}{22^*}\int_{\R^N}|u_1|^{2^*}-\frac{\la_1t^\al}{\al}\int_{\R^N}|u_1|^\al\Big]
\\
&+\ds \Big[\frac{s^2}{2}\int_{\R^N}(|\nabla v_\va|^2+|v_\va|^2)
-\frac{s^{2^*}}{22^*}\int_{\R^N}|v_\va|^{2^*}-\frac{\la_2s^r}{r}\int_{\R^N}|v_\va|^r\Big]\\
\triangleq &f(t)+g(s).
\end{array}
\end{equation}
Similar to the proof of \eqref{2.13}, since $r>2$,
$$
\ds g(s)=\frac{s^2}{2}\int_{\R^N}(|\nabla v_\va|^2+|v_\va|^2)
-\frac{s^{2^*}}{22^*}\int_{\R^N}v_\va^{2^*}-\frac{\la_2s^r}{r}\int_{\R^N}|v_\va|^r,~~~s>0,
$$
has a unique critical point $s_\va>0$ corresponding to its maximum and there exists $C>0$ independent of $\va$ such that
\begin{equation}\label{3.7}
s_\va>C>0\ \ \text{for~all}~\va>0.
\end{equation}
Then by \eqref{3.3},\eqref{3.5},\eqref{3.7} and $r>2$, by direct calculation, it is easy to check that
\begin{equation}\label{3.26}
\ds \max_{s>0}g(s)=g(s_\va)<\frac1N 2^{\frac{N-2}{2}}S^{\frac N2}\ \ \ \hbox{for}~\va~\hbox{small~enough}.
\end{equation}
Then by \eqref{3.2},
$$f(t)+g(s)<0,\ \ \ \forall~t>t_0,~s>0.$$
Thus by \eqref{3.8},
$$
\ds\max_{t,s>0}I(tu_1,sv_\va)=\max_{0<t\leq t_0,s>0}I(tu_1,sv_\va).
$$
Set
$$
\ds g_\va(s)=\frac{s^2}{2}\int_{\R^N}(|\nabla v_\va|^2+|v_\va|^2)
-\frac{s^{2^*}}{2^*}\int_{\R^N}v_\va^{2^*}-\frac{\la_2s^r}{r}\int_{\R^N}|v_\va|^r,~~~s>0.
$$
Similar to the proof of \eqref{3.7}, $g_\va(s)$ has a unique critical point $s(\va)>0$ such that $$g_\va(s(\va))=\max\limits_{s>0}g_\va(s)\ \ \ \text{and}\ \ \  s(\va)>C_0>0,$$
where $C_0>0$ is a constant independent of $\va$. Since $g_\va(s)$ is strictly increasing for $0<s\leq s(\va)$, for any $0<s<C_0$, we have that $g_\va(s)<g_\va(C_0)$ and then
$$I(tu_1,sv_\va)<I(tu_1,C_0v_\va),\ \ \forall~t>0,~0<s<C_0.$$
Hence
\begin{equation}\label{3.9}
\ds \max_{t,s>0}I(tu_1,sv_\va)=\max_{0<t<t_0,s\geq C_0}I(tu_1,sv_\va).
\end{equation}
For $0<t<t_0,s\geq C_0$, we see from \eqref{3.6}, $-\frac{1}{2}\leq\be<0$ and $u_1\in C(\R^N)$ that there exists a $C\triangleq\max\limits_{B_{y_0}(R_0)}u_1>0$ such that
$$
\ds |\be|t^{\frac{2^*}{2}}s^{\frac{2^*}{2}}\int_{\R^N}|u_1|^{\frac{2^*}{2}}|v_\va|^{\frac{2^*}{2}}
\leq CC_0^{\frac{2^*}{2}-2}\frac{t_0^{\frac{2^*}{2}}}{2}s^2\int_{\R^N}|v_\va|^{\frac{2^*}{2}}\leq s^2o(\va^2),
$$
so
\begin{equation}\label{3.10}
\begin{array}{ll}
 \ds &I(tu_1,sv_\va)\\
=&\ds \frac{t^2}{2}\int_{\R^N}(|\nabla u_1|^2+|u_1|^2)+\frac{s^2}{2}\int_{\R^N}(|\nabla v_\va|^2+|v_\va|^2)\\
&-\ds \frac{1}{2^*}\int_{\R^N}(t^{2^*}|u_1|^{2^*}+s^{2^*}|v_\va|^{2^*}+2\be t^{\frac{2^*}{2}}s^{\frac{2^*}{2}}|u_1|^{\frac{2^*}{2}}|v_\va|^{\frac{2^*}{2}})\\
&-\ds\frac{\la_1t^\al}{\al}\int_{\R^N}|u_1|^\al-\frac{\la_2s^r}{r}\int_{\R^N}|v_\va|^r\\
\leq&\ds \Big[\frac{t^2}{2}\int_{\R^N}(|\nabla u_1|^2+|u_1|^2)-\frac{t^{2^*}}{2^*}\int_{\R^N}|u_1|^{2^*}-\frac{\la_1t^\al}{\al}\int_{\R^N}|u_1|^\al\Big]\\
&+\ds \Big[\frac{s^2}{2}\Big(\int_{\R^N}(|\nabla v_\va|^2+|v_\va|^2)+o(\va^2)\Big)-\frac{s^{2^*}}{2^*}\int_{\R^N}|v_\va|^{2^*}-\frac{\la_2s^r}{r}\int_{\R^N}|v_\va|^r\Big]\\
\triangleq&f_1(t)+g_1(s).
\end{array}
\end{equation}
Note that $\ds \max_{t>0}f_1(t)=f_1(1)=B_1$. Similar to the proof of \eqref{3.26}, we have that
$$
\ds \max_{s>0}g_1(s)<\frac 1NS^{\frac N2}\ \ \ \hbox{for}~\va~\hbox{small~enough}.
$$
By \eqref{3.9}\eqref{3.10}, we see that
\begin{equation}\label{3.11}
\begin{array}{ll}
\ds \max_{t,s>0}I(tu_1,sv_\va)&=\ds\max_{0<t\leq t_0,s\geq 1}I(tu_1,sv_\va)\\
&\leq \ds \max_{t>0}f_1(t)+\max_{s>0}g_1(s)\\
&<\ds B_1+\frac 1NS^{\frac N2}\ \ \ \hbox{for}~\va~\hbox{small~enough}.
\end{array}
\end{equation}
Denote
$$
\ds D_1=\int_{\R^N}|u_1|^{2^*}, \ \ D_2=\be \int_{\R^N}|u_1|^{\frac{2^*}{2}}|v_\va|^{\frac{2^*}{2}}<0,\ \ \ds D_3=\int_{\R^N}|v_\va|^{2^*},
$$
$$
D_4=\ds\int_{\R^N}(|\nabla v_\va|^2+|v_\va|^2),\ \ E_1=\la_1 \int_{\R^N}|u_1|^\al, \ \ E_2=\la_2 \int_{\R^N}|v_\va|^r.
$$
Then $$D_1+E_1=\ds\int_{\R^N}(|\nabla u_1|^2+|u_1|^2)$$
and \begin{equation}\label{3.12}
D_2^2=|\be|^2 \left(\int_{\R^N}|u_1|^{\frac{2^*}{2}}|v_\va|^{\frac{2^*}{2}}\right)^2\leq
\frac{1}{4}D_1 D_3<D_1D_3.
\end{equation}
We claim that there exist $t_\va,s_\va>0$ such that $(t_\va u_1,s_\va v_\va)\in M$, i.e. $(t_\va,s_\va)$ solves the following system
\begin{equation}\label{3.13}\left\{%
\begin{array}{ll}
    t^2(D_1+E_1)=t^{2^*}D_1+t^{\frac{2^*}{2}}s^{\frac{2^*}{2}}D_2+t^\al E_1, & \hbox{$ $} \\
    s^2D_4=t^{2^*}D_3+t^{\frac{2^*}{2}}s^{\frac{2^*}{2}}D_2+t^r E_2,& \hbox{$ $} \\
    t,s>0.& \hbox{$ $} \\
\end{array}%
\right.
\end{equation}
System \eqref{3.13} is equivalent to
\begin{equation}\label{3.14}\left\{%
\begin{array}{ll}\ds
(t^{2-\frac{2^*}{2}}-t^{\frac{2^*}{2}})D_1=s^{\frac{2^*}{2}}D_2+(t^{\al-\frac{2^*}{2}}-t^{2-\frac{2^*}{2}})E_1, & \hbox{$ $} \\
s^{2-\frac{2^*}{2}}D_4=s^{\frac{2^*}{2}}D_3+t^{\frac{2^*}{2}}D_2+s^{r-\frac{2^*}{2}}E_2,& \hbox{$ $} \\
    t,s>0,& \hbox{$ $} \\
\end{array}%
\right.
\end{equation}
then
$$\ds
(t^{2-\frac{2^*}{2}}-t^{\frac{2^*}{2}})D_1<(t^{\al-\frac{2^*}{2}}-t^{2-\frac{2^*}{2}})E_1,
$$
hence $t>1$ since $2<\al<2^*$ and $1<\frac{2^*}{2}<2$. Moreover, by \eqref{3.14}, we have that
$$
s^{\frac{2^*}{2}}=\ds\frac{(t^{2-\frac{2^*}{2}}-t^{\frac{2^*}{2}})D_1-(t^{\al-\frac{2^*}{2}}-t^{2-\frac{2^*}{2}})E_1}{D_2}.
$$
Then \eqref{3.14} is equivalent to
\begin{eqnarray*}
\ds G(t)
&\triangleq&\ds \Big[\frac{(t^{2-\frac{2^*}{2}}-t^{\frac{2^*}{2}})D_1-(t^{\al-\frac{2^*}{2}}-t^{2-\frac{2^*}{2}})E_1}{D_2}\Big]^{\frac{4-2^*}{2^*}}D_4\\
&&-
\Big[\frac{(t^{2-\frac{2^*}{2}}-t^{\frac{2^*}{2}})D_1-(t^{\al-\frac{2^*}{2}}-t^{2-\frac{2^*}{2}})E_1}{D_2}\Big]D_3-t^{\frac{2^*}{2}}D_2\\
&&-\ds\Big[\frac{(t^{2-\frac{2^*}{2}}-t^{\frac{2^*}{2}})D_1-(t^{\al-\frac{2^*}{2}}-
t^{2-\frac{2^*}{2}})E_1}{D_2}\Big]^{\frac{2r-2^*}{2^*}}E_2=0, \ \ \ \ \ \ \ t>1.
\end{eqnarray*}
Since $N>5$ and $2<r<2^*$, $2^*<4<2r$. Then $G(1)=-D_2>0$ and $\ds \lim_{t\rightarrow +\infty}\frac{G(t)}{t^{\frac{2^*}{2}}}=\frac{D_1D_3-D_2^2}{D_2}<0$, hence $G(t)=0$ has a
solution $t>1$. So \eqref{3.14} has a solution $t_\va,s_\va>0,$ i.e. \eqref{3.13} has a solution $t_\va,s_\va>0.$ Therefore $(t_\va u_1,s_\va v_\va)\in M$. By \eqref{3.11}, we have that
$$
B\leq \ds I(t_\va u_1,s_\va v_\va)\leq \max_{t,s>0}I(t u_1,s v_\va)<B_1+\frac1N S^{\frac N2}.
$$
Similarly, we can also prove that $B<B_2+\ds\frac1N S^{\frac N2}$. Moreover, by  Lemma~\ref{le2.1}(i) and \eqref{1.8},\eqref{1.9}, we see that
$$
A>\ds \max\Big\{B_1+\frac1N S^{\frac N2},\,B_2+\frac1N S^{\frac N2}\Big\}.
$$
Then we complete the proof of Lemma~\ref{le3.1}.
\end{proof}

\begin{remark}\label{re1.2} In \cite{cz1,cz2}, Chen and Zou proved the same result for any $\be<0$, however, for equation \eqref{1.1}, because of the effect of the perturbation terms, we only obtain the estimate for $\be\in[-\frac{1}{2},0)$.
\end{remark}

\begin{lemma}\label{le3.2}~~
Suppose that  $-\frac12\leq\be<0$, then there exist $C_2>C_1>0$, such that for any $(u,v)\in M$ with $I(u,v)\leq C$, then we have that
$$
C_1\leq\ds\int_{\R^N}|u|^{2^*},\,\,\int_{\R^N}|v|^{2^*}\leq C_2.
$$
\end{lemma}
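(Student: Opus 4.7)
The plan is to extract both bounds directly from the two equations defining $M$, combined with the energy bound and the standard Sobolev and interpolation inequalities on $H^1_r(\R^N)$. First I would substitute the two $M$-constraints into $I$ to eliminate the quadratic terms, obtaining
\begin{equation*}
I(u,v)=\Bigl(\tfrac12-\tfrac1{2^*}\Bigr)\!\int_{\R^N}\!\bigl(|u|^{2^*}+|v|^{2^*}+2\be|u|^{\frac{2^*}{2}}|v|^{\frac{2^*}{2}}\bigr)+\Bigl(\tfrac12-\tfrac1\al\Bigr)\la_1\!\int_{\R^N}\!|u|^\al+\Bigl(\tfrac12-\tfrac1r\Bigr)\la_2\!\int_{\R^N}\!|v|^r.
\end{equation*}
Since $|\be|\leq\tfrac12$, AM--GM gives $2|\be||u|^{\frac{2^*}{2}}|v|^{\frac{2^*}{2}}\leq\tfrac12(|u|^{2^*}+|v|^{2^*})$, so the first integrand pointwise dominates $\tfrac12(|u|^{2^*}+|v|^{2^*})$. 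Hence every term in this expression for $I$ is nonnegative, and $I(u,v)\leq C$ immediately yields the upper bound $\int|u|^{2^*},\int|v|^{2^*}\leq C_2$ together with uniform upper bounds on $\int|u|^\al$ and $\int|v|^r$.

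For the lower bound I would focus on the first $M$-equation (the $v$-case is symmetric, with $r$ in place of $\al$). Since $\be<0$ discards the coupling term,
\begin{equation*}
\|u\|_H^2 \leq \int_{\R^N}|u|^{2^*}+\la_1\int_{\R^N}|u|^\al.
\end{equation*}
Combined with Sobolev $\int|u|^{2^*}\leq S^{-2^*/2}\|u\|_H^{2^*}$ and $\int|u|^\al\leq C\|u\|_H^\al$, and the hypothesis $u\not\equiv 0$, this forces a uniform lower bound $\|u\|_H\geq c>0$ (since $2^*-2,\al-2>0$); the same displayed inequality together with Step~1 also gives an upper bound $\|u\|_H\leq C'$.

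To convert $\|u\|_H\geq c$ into a lower bound on $\int|u|^{2^*}$, I would apply the interpolation inequality $\|u\|_\al\leq\|u\|_2^{\theta}\|u\|_{2^*}^{1-\theta}$ with $\theta=\tfrac{2(2^*-\al)}{\al(2^*-2)}\in(0,1)$; since $\|u\|_2\leq\|u\|_H\leq C'$ is uniformly bounded, this yields $\int|u|^\al\leq \tilde C\bigl(\int|u|^{2^*}\bigr)^{\gamma}$ with $\gamma=\tfrac{\al-2}{2^*-2}\in(0,1)$. Inserting this back into the preceding displayed inequality,
\begin{equation*}
0<c\leq\|u\|_H^2\leq\int_{\R^N}|u|^{2^*}+\la_1\tilde C\Bigl(\int_{\R^N}|u|^{2^*}\Bigr)^\gamma.
\end{equation*}
Because $\gamma<1$, the right-hand side tends to $0$ as $\int|u|^{2^*}\to 0$, which forces $\int|u|^{2^*}\geq C_1>0$; replacing $\al$ by $r$ gives the corresponding bound for $v$.

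The main obstacle is this last conversion: the $M$-constraint only controls $\|u\|_H$ directly, not $\int|u|^{2^*}$. The rescue is that the subcritical perturbation $\la_1\int|u|^\al$ is itself controlled by $\int|u|^{2^*}$ raised to the sub-unit power $\gamma$, so that a hypothetical vanishing of $\int|u|^{2^*}$ would collapse the entire right-hand side of the $M$-equation and contradict $\|u\|_H\geq c>0$. The sign restriction $\be<0$ enters only to discard the coupling term when passing from the $M$-equation to the key inequality; for $\be>0$ the same strategy would need a different first step.
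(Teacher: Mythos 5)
Your proposal is correct and follows essentially the same route as the paper: the upper bound comes from rewriting $I$ on $M$ and absorbing the coupling term using $|\be|\leq\frac12$, and the lower bound comes from dropping the negative coupling term in the $M$-constraint, deducing $\|u\|_{H^1}\geq c>0$ via Sobolev, and then interpolating $L^\al$ between $L^2$ and $L^{2^*}$ to force $\int_{\R^N}|u|^{2^*}\geq C_1$. The only cosmetic differences are that the paper bounds $\|(u,v)\|$ by coercivity of $I$ on $M$ rather than from the constraint itself, and that the relevant feature of your exponent $\gamma=\frac{\al-2}{2^*-2}$ is $\gamma>0$ (not $\gamma<1$).
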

\begin{proof}
Since $(u,v)\in M$ and $2<\al,r<2^*$ and $-\frac12\leq\be<0$,
\begin{eqnarray*}
\ds I(u,v)&=&\frac1N\int_{\R^N}(|u|^{2^*}+2\be |u|^{\frac{2^*}{2}}|v|^{\frac{2^*}{2}}+|v|^{2^*})+(\frac 12-\frac 1\al)\la_1\int_{\R^N}|u|^\al\\
&&+(\frac 12-\frac 1 r)\la_2\int_{\R^N}|v|^r\\
&\geq& \frac1N(1-|\be|)\int_{\R^N}(|u|^{2^*}+|v|^{2^*}),
\end{eqnarray*}
then there exists $C_2>0$ such that
$\ds\int_{\R^N}|u|^{2^*},\,\int_{\R^N}|v|^{2^*}\leq C_2.$

Now, we prove that $\ds\int_{\R^N}|u|^{2^*},\int_{\R^N}|v|^{2^*}\geq C_1$.

By $(u,v)\in M$, $\be<0$ and the Sobolev embedding inequality, we have that
\begin{equation}\label{3.15}
\begin{array}{ll}
\ds \int_{\R^N}(|\nabla u|^2+|u|^2)&=\ds\int_{\R^N}|u|^{2^*}+\be |u|^{\frac{2^*}{2}}|v|^{\frac{2^*}{2}}+\la_1\int_{\R^N}|u|^\al\\[5mm]
&\leq\ds \int_{\R^N}|u|^{2^*}+\la_1\int_{\R^N}|u|^\al\\[5mm]
&\leq\ds S^{-\frac{2^*}{2}}\Big(\int_{\R^N}(|\nabla u|^2+|u|^2)\Big)^{\frac{2^*}{2}}+C\Big(\int_{\R^N}(|\nabla u|^2+|u|^2)\Big)^{\frac{\al}{2}}.
\end{array}
\end{equation}
We conclude that
$$\int_{\R^N}(|\nabla u|^2+|u|^2)\geq C\ \ \ \hbox{for~some}~C>0$$
 since $\ds\frac{\al}{2},\frac{2^*}{2}>1$. Moreover, $I(u,v)\leq C$ implies that $(u,v)$ is bounded in $H$, i.e. $\|(u,v)\|\leq C$ for some $C>0$, then by \eqref{3.15} and the interpolation inequality, we have that
\begin{eqnarray*}
\ds C\leq\int_{\R^N}(|\nabla u|^2+|u|^2)&\leq&\int_{\R^N}|u|^{2^*}+\la_1\int_{\R^N}|u|^\al\\
&\leq&\ds \int_{\R^N}|u|^{2^*}+\la_1(\int_{\R^N}|u|^{2^*})^{\frac{\theta\alpha}{2^*}}(\int_{\R^N}|u|^2)^{\frac{(1-\theta)\alpha}{2}}\\
&\leq&\ds \int_{\R^N}|u|^{2^*}+C(\int_{\R^N}|u|^{2^*})^{\frac{\theta\alpha}{2^*}},
\end{eqnarray*}
where $\ds \frac{1}{\al}=\frac{\theta}{2^*}+\frac{1-\theta}{2}$ and $0<\theta<1$. So there exists a $C>0$ such that
$$\ds\int_{\R^N}|u|^{2^*}\geq C.$$ Similarly, we have that $\ds\int_{\R^N}|v|^{2^*}\geq C$ and we complete the proof of the Lemma.
\end{proof}

\noindent $\textbf{Proof of Theorem 1.2}$

The main idea of the proof comes from \cite{cz1,cz2}, but more careful analysis is needed. Note that $I$ is coercive and bounded from below on $M$, then by the Ekeland's variational principle (see \cite{sm}), there exists a minimizing sequence $\{(u_n,v_n)\}\subset M$ satisfying

\begin{equation}\label{3.16}
\ds I(u_n,v_n)\leq \min\{B+\frac1n,A\},
\end{equation}
\begin{equation}\label{3.17}
\ds\ I(u,v)\geq I(u_n,v_n)-\frac1n \|(u_n,v_n)-(u,v)\|,\,\,\forall (u,v)\in M.
\end{equation}
Then it is easy to see that  $\{(u_n,v_n)\}$ is bounded in $H$. For any $(\varphi,\psi)\in H$ with $\|\varphi\|_{H^1(\R^N)},\|\psi\|_{H^1(\R^N)}\leq 1$ and
each $n\in N$, define $h_n$, $g_n:\R^3\rightarrow \R$ by
\begin{equation}\label{3.18}
\begin{array}{ll}
\ds h_n(t,s,l)&=\ds \int_{\R^N}|\nabla (u_n+t\varphi+su_n)|^2+\int_{\R^N}|u_n+t\varphi+su_n|^2-\int_{\R^N}|u_n+t\varphi+su_n|^{2^*}\\
&\ \ \ -\be \ds\int_{\R^N}|u_n+t\varphi+su_n|^{\frac{2^*}{2}}|v_n+t\phi+lv_n|^{\frac{2^*}{2}}-\ds \la_1\int_{\R^N}|u_n+t\varphi+su_n|^\al
\end{array}
\end{equation}
and
\begin{equation}\label{3.19}
\begin{array}{ll}
\ds g_n(t,s,l)&=\ds \int_{\R^N}|\nabla (v_n+t\phi+lv_n)|^2+\int_{\R^N}|v_n+t\phi+lv_n|^2-\int_{\R^N}|v_n+t\phi+lv_n|^{2^*}\\
&\ \ \ -\be \ds\int_{\R^N}|u_n+t\varphi+su_n|^{\frac{2^*}{2}}|v_n+t\phi+lv_n|^{\frac{2^*}{2}}-\ds \la_2\int_{\R^N}|v_n+t\phi+lv_n|^r
\end{array}
\end{equation}

Let $\mathbf{0}=(0,0,0)$. Then $h_n,g_n\in C^1(\R^3,\R)$ and $h_n(\mathbf{0})=g_n(\mathbf{0})=0$.
$$
\frac{\partial h_n}{\partial s}(\mathbf{0})=-(2^*-2)\int_{\R^N}|u_n|^{2^*}-(\frac{2^*}{2}-2)\be\int_{\R^N}|u_n|^{\frac{2^*}{2}}|v_n|^{\frac{2^*}{2}}-(\al-2)\la_1
\int_{\R^N}|u_n|^\al,
$$
$$
\frac{\partial h_n}{\partial l}(\mathbf{0})=\frac{\partial g_n}{\partial s}(\mathbf{0})=-\frac{2^*}{2}\be\int_{\R^N}|u_n|^{\frac{2^*}{2}}|v_n|^{\frac{2^*}{2}},
$$
$$
\frac{\partial g_n}{\partial l}(\mathbf{0})=-(2^*-2)\int_{\R^N}|v_n|^{2^*}-(\frac{2^*}{2}-2)\be\int_{\R^N}|u_n|^{\frac{2^*}{2}}|v_n|^{\frac{2^*}{2}}-(r-2)\la_2
\int_{\R^N}|v_n|^r.
$$
Define the matrix
$$
F_n\triangleq
 \left (
 \begin{array}{ll}
 \ds \frac{\partial h_n}{\partial s}(\mathbf{0}) \,\,\, \frac{\partial h_n}{\partial l}(\mathbf{0})\\
 \ds \frac{\partial g_n}{\partial s}(\mathbf{0})\,\,\,\frac{\partial g_n}{\partial l}(\mathbf{0})
 \end{array}
 \right ).
$$
Then by $-\frac{1}{2}\leq\be<0$, the H\"{o}lder inequality and Lemma~\ref{le3.2}, we have that
\begin{eqnarray*}
\ds \hbox{det}(F_n)&=&\Big[(2^*-2)\int_{\R^N}|u_n|^{2^*}+\be(\frac{2^*}{2}-2)\int_{\R^N}|u_n|^{\frac{2^*}{2}}|v_n|^{\frac{2^*}{2}}+\la_1(\al-2)\int_{\R^N}|u_n|^\al\Big]\\
&&\times \Big[(2^*-2)\int_{\R^N}|v_n|^{2^*}+\be(\frac{2^*}{2}-2)\int_{\R^N}|u_n|^{\frac{2^*}{2}}|v_n|^{\frac{2^*}{2}}+\la_2(r-2)\int_{\R^N}|v_n|^r\Big]\\
&&-(\frac{2^*}{2})^2\be^2(\int_{\R^N}|u_n|^{\frac{2^*}{2}}|v_n|^{\frac{2^*}{2}})^2\\
&>&(2^*-2)^2\int_{\R^N}|u_n|^{2^*}\int_{\R^N}|v_n|^{2^*}+\be^2(\frac{2^*}{2}-2)^2(\int_{\R^N}|u_n|^{\frac{2^*}{2}}|v_n|^{\frac{2^*}{2}})^2\\
&&+\be (\frac{2^*}{2}-2)(2^*-2)\int_{\R^N}(|u_n|^{2^*}+|v_n|^{2^*})\int_{\R^N}|u_n|^{\frac{2^*}{2}}|v_n|^{\frac{2^*}{2}}\\
&&-(\frac{2^*}{2})^2\be^2(\int_{\R^N}|u_n|^{\frac{2^*}{2}}|v_n|^{\frac{2^*}{2}})^2\\
&>&(2^*-2)^2\int_{\R^N}|u_n|^{2^*}\int_{\R^N}|v_n|^{2^*}+\Big[\be^2(\frac{2^*}{2}-2)^2+2\be (\frac{2^*}{2}-2)(2^*-2)\\
&&-(\frac{2^*}{2})^2\be^2\Big]\times (\int_{\R^N}|u_n|^{\frac{2^*}{2}}|v_n|^{\frac{2^*}{2}})^2\\
&>&(2^*-2)^2\left[(2^*-2)-2\be^2+2\be(\frac{2^*}{2}-2)\right]\int_{\R^N}|u_n|^{2^*}\int_{\R^N}|v_n|^{2^*}\geq C>0,
\end{eqnarray*}
where $C$ is independent of $n$ and we have used the fact that
$$f(t)\triangleq -2t^2+(2^*-4)t+2^*-2\geq\min\{f(-\frac{1}{2}),f(0)\}>0,\ \ \forall~t\in[-\frac{1}{2},0).$$

By the Implicit Function Theorem, there exist $\delta_n>0$ and functions $s_n(t)$, $l_n(t)\in C^1(-\delta_n,\delta_n)$. Moreover, $s_n(0)=l_n(0)=0$,
$$
h_n(t,s_n(t),l_n(t))=0,\,\,g_n(t,s_n(t),l_n(t))=0,\ \ \ \forall~t\in (-\delta_n,\delta_n)
$$
and
\begin{equation*}
 \left\{
 \begin{array}{ll}
 \ds s_n'(0)=\frac{1}{\hbox{det}F_n}\Big(\frac{\partial g_n}{\partial t}(\mathbf{0})\frac{\partial h_n}{\partial l}(\mathbf{0})-\frac{\partial g_n}{\partial l}(\mathbf{0})\frac{\partial h_n}{\partial t}(\mathbf{0})\Big)\\
 l_n'(0)=\ds \frac{1}{\hbox{det}F_n}\Big(\frac{\partial g_n}{\partial s}(\mathbf{0})\frac{\partial h_n}{\partial t}(\mathbf{0})-\frac{\partial g_n}{\partial t}(\mathbf{0})\frac{\partial h_n}{\partial s}(\mathbf{0})\Big).
 \end{array}
 \right.
\end{equation*}
Since $\{(u_n,v_n)\}$ is bounded in $H$, it is easy to see that $|\frac{\partial h_n}{\partial t}(\mathbf{0})|,|\frac{\partial g_n}{\partial t}(\mathbf{0})|\leq C$, where $C$ is independence of $n$. Then by Lemma~\ref{le3.2}, we also have that
$$
|\frac{\partial h_n}{\partial s}(\mathbf{0})|,\,|\frac{\partial h_n}{\partial l}(\mathbf{0})|,\,|\frac{\partial g_n}{\partial s}(\mathbf{0})|,\,|\frac{\partial g_n}{\partial l}(\mathbf{0})|\leq C.
$$
Hence, we can conclude that
\begin{equation}\label{3.20}
|s_n'(0)|,\,\,|l_n'(0)|\leq C,
\end{equation}
where $C$ is independence of $n$.

Denote $$\varphi_{n,t}\triangleq u_n+t\varphi+s_n(t)u_n,\ \ \ \phi_{n,t}\triangleq v_n+t\phi+l_n(t)v_n,$$
 then
$(\varphi_{n,t},\phi_{n,t})\in M$ for $\forall~t\in (-\delta_n,\delta_n)$. It follows from \eqref{3.17} that
\begin{equation}\label{3.21}
 I(\varphi_{n,t},\phi_{n,t})-I(u_n,v_n)\geq -\frac1n\|(t\varphi+s_n(t)u_n,t\phi+l_n(t)v_n\|.
\end{equation}
By $(u_n,v_n)\in M$ and the Taylor Expansion we have that
\begin{equation}\label{3.22}
\begin{array}{ll}
I(\varphi_{n,t},\phi_{n,t})-I(u_n,v_n)&=\langle I'(u_n,v_n),(t\varphi+s_n(t)u_n,t\phi+l_n(t)v_n)\rangle+r(n,t)\\
&=t\langle I'(u_n,v_n),(\varphi,\phi)\rangle+r(n,t),
\end{array}
\end{equation}
where $r(n,t)=o(\|(t\varphi+s_n(t)u_n,t\phi+l_n(t)v_n)\|)$ as $t\rightarrow 0$. By \eqref{3.20}, we see that
\begin{equation}\label{3.23}
\ds\limsup_{t\rightarrow0}\|(\varphi+\frac{s_n(t)}{t}u_n, \phi+\frac{l_n(t)}{t}v_n)\|\leq C,
\end{equation}
where $C$ is independence of $n$. Hence $r(n,t)=o(t)$. By \eqref{3.21}-\eqref{3.23} and letting $t\rightarrow 0$, we have that
$$
|\langle I'(u_n,v_n)(\varphi,\phi)\rangle|\leq \frac{C}{n},
$$
where $C$ is independence of $n$. Hence
\begin{equation}\label{3.24}
\ds \lim_{n\rightarrow +\infty}I'(u_n,v_n)=0.
\end{equation}
Since $\{(u_n,v_n)\}$ is bounded in $H$, passing to a subsequence, we may assume that $(u_n,v_n)\rightharpoonup (u,v)$ in $H$ for some $(u,v)\in H$. Set $\om_n=u_n-u,\sigma_n=v_n-v$ and use the same notations as in the proof of Theorem 1.1, we also see that $I'(u,v)=0$ and \eqref{2.18}-\eqref{2.21} hold. Furthermore,
\begin{equation}\label{3.25}
0\leq I(u,v)\leq I(u,v)+\frac1N(b_1+b_2)=\ds\lim_{n\rightarrow +\infty}I(u_n,v_n)=B.
\end{equation}

\textbf{Case 1:} $u\equiv0,v\equiv0$.

By \eqref{3.25}, we have $0<NB=b_1+b_2<+\infty$. Then we conclude from Lemma 3.3 that $0<b_1<+\infty,0<b_2<+\infty$. Hence we may assume that $\om_n\neq 0,\sigma_n\neq0$ for $n$ large.
Similar to the argument used in the proof of Theorem 1.3 (P. 32-33) in \cite{cz2}, for $n$ large, there exist $t_n,s_n>0$ such that
$(t_n\omega_n,s_n\sigma_n)\in \mathcal{N}$ and $\ds\lim_{n\rightarrow +\infty}(|t_n-1|+|s_n-1|)=0$. Then we have that
$$
\frac1N(b_1+b_2)=\ds\lim_{n\rightarrow +\infty}E(\omega_n,\sigma_n)=\lim_{n\rightarrow +\infty}E(t_n\omega_n,s_n\sigma_n)\geq A.
$$
By \eqref{3.25}, we see that $B\geq A$, which is a contradiction with Lemma~\ref{le3.1}. Therefore, Case 1 is impossible.

\textbf{Case 2:} $u\not\equiv0,v\equiv0$ or $u\equiv0,v\not\equiv0$.

Without loss of generality, we may assume that $u\not\equiv 0,v\equiv0$. Then $u\in H^r_1(\R^N)$
is a nontrivial solution of
$-\Delta u+u=|u|^{2^*-2}u+\la_1|u|^{\al-2}u$, so $\ds I(u,0)\geq B_1$. By Lemma 3.3, we have that $b_2>0$. By Case 1, we may assume that $b_1=0$. Then
$$\ds\lim_{n\rightarrow +\infty}\ds\int_{\R^N}|\omega_n|^{\frac{2^*}{2}}|\sigma_n|^{\frac{2^*}{2}}=0,$$ hence
$$
\ds\int_{\R^N}|\nabla \sigma_n|^2=\int_{\R^N}|\sigma_n|^{2^*}+o(1)\leq S^{-\frac{2^*}{2}}\Big(\int_{\R^N}|\nabla \sigma_n|^2\Big)^{\frac{2^*}{2}}+o(1),
$$
which implies that $b_2\geq S^{\frac N2}$. By \eqref{3.25} we have that
$$
B\geq B_1+\frac1N b_2\geq B_1+\frac1N S^{\frac N2},
$$
which is a contradiction with Lemma~\ref{le3.1}. Therefore Case 2 is impossible.

Since Case 1 and Case 2 are both impossible, we see that $u\not\equiv 0,v\not\equiv 0$. By $I'(u,v)=0$, then we have that $(u,v)\in M$, hence by \eqref{3.25}, $I(u,v)=B$. By a similar argument as in the proof of Theorem~\ref{th1.1}, we see that $(u,v)\in H$ is a positive solution of \eqref{1.1} and $I(u,v)=B$. This completes the proof of Theorem~\ref{th1.2}.

 \end{document}